\theoremstyle{plain}
\newtheorem{theorem}{Theorem}
\newtheorem*{theorem*}{Theorem}
\newtheorem{conj}[theorem]{Conjecture}
\newtheorem*{conj*}{Conjecture}
\newtheorem{lemma}[theorem]{Lemma}
\newtheorem{prop}[theorem]{Proposition}
\theoremstyle{remark}
\newcommand{\Tr}{{\mathrm Tr}}
\numberwithin{equation}{section}
\def\N{\mathbb N}
\def\Z{\mathbb Z}
\def\Q{\mathbb Q}
\def\O{\mathcal O}
\begin{document}

\author{Magdal\'ena Tinkov\'a}
\author{Paul Voutier}

\title{Indecomposable integers in real quadratic fields}

\address{Charles University, Faculty of Mathematics and Physics, Department of Algebra,
Sokolovsk\'{a}~83, 18600 Praha 8, Czech Republic}
\email{tinkova.magdalena@gmail.com}

\address{London, UK}
\email{paul.voutier@gmail.com}

\keywords{real quadratic fields, indecomposable integers, continued fractions}

\thanks{The first author was supported by Czech Science Foundation (GA\v{C}R), grant 17-04703Y, by the Charles University, project GA UK No. 1298218, by Charles University Research Centre program UNCE/SCI/022, and by the project SVV-2017-260456.}

\begin{abstract}
In 2016, Jang and Kim stated a conjecture about the norms of indecomposable
integers in real quadratic number fields $\Q \left( \sqrt{D} \right)$ where $D>1$
is a squarefree integer. Their conjecture was later disproved by Kala
for $D \equiv 2 \bmod 4$. We investigate such indecomposable integers in greater
detail. In particular, we find the minimal $D$ in each congruence class $D \equiv 1,2,3 \bmod 4$
that provides a counterexample to the Jang-Kim Conjecture; provide infinite
families of such counterexamples; and state a refined version of the Jang-Kim
conjecture. Lastly, we prove a slightly weaker version of our refined conjecture that is of the correct order of magnitude, showing the Jang-Kim Conjecture is only wrong by at most $O \left( \sqrt{D} \right)$.
\end{abstract}

\setcounter{tocdepth}{2}
\maketitle 

\section{Introduction}

The ring of algebraic integers $\O_{K}$ of number field $K$ is one of the key
objects studied in algebraic number theory, and its additive structure sometimes
plays a surprisingly important role. For example, in totally real number fields,
we can focus on the semiring of totally positive elements of $\O_{K}$, denoted
by $\O_{K}^{+}$, and define the subset of so-called indecomposable integers,
i.e., elements of $\O_{K}^{+}$ which cannot be expressed as a sum of two elements
of $\O_{K}^{+}$. Several interesting applications of indecomposable integers to
universal quadratic forms (i.e., positive quadratic forms over $\O_{K}$ that
represent all elements of $\O_{K}^{+}$) have been recently developed by Blomer,
Kala or Kim \cite{BK,Ka2,BK2,Kim}, although they have been also used by Siegel
\cite{Si} already in 1945 in a similar context.

All indecomposable integers in real quadratic fields $\Q \left( \sqrt{D} \right)$,
where $D>1$ is a squarefree integer, can be nicely described using the continued
fraction expansion of $\sqrt{D}$ or $\left( \sqrt{D}-1 \right)/2$ in the cases $D \equiv 2,3 \bmod 4$
or $1 \bmod 4$, resp. -- see Lemma~\ref{lem:ds} below.

Currently, we do not have such a characterization for the fields of higher degrees. Nevertheless, we can mention a partial result given by \v{C}ech, Lachman, Svoboda, Zemkov\'a and the first author \cite{CLSTZ} considering biquadratic fields, which have degree $4$.

Using the above description for real quadratic fields, Dress and Scharlau
\cite[Theorem~3]{DS} deduced an upper bound on the norm of quadratic indecomposable
integers. This result was later refined by Jang and Kim by proved the following
result.

\begin{theorem}[\cite{JK} Theorem~5]
\label{thm:jk}
Let $D>1$ be a squarefree integer and let $-N$ be the largest negative
norm of the algebraic integers in $K=\Q \left( \sqrt{D} \right)$. Then
\[\def\arraystretch{2.2}
N(\alpha) \leq
\left\{
\begin{array}{ll}
	\displaystyle\frac{D-1}{4N} & \text{if $D \equiv 1 \bmod 4$}, \\
	\displaystyle\frac{D}{N}  & \text{if $D \equiv 2,3 \bmod 4$},
\end{array}
\right.
\]
for all indecomposable $\alpha\in \O_{K}$. 
\end{theorem}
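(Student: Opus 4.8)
The plan is to reduce everything to the explicit list of indecomposables furnished by Lemma~\ref{lem:ds} and then to maximise the norm along each semiconvergent family. Write $\theta = \sqrt{D}$ when $D \equiv 2,3 \bmod 4$ and $\theta = (\sqrt{D}-1)/2$ when $D \equiv 1 \bmod 4$, let $p_j/q_j$ be the convergents of the continued fraction of $\theta$, and set $\alpha_j = p_j + q_j\theta \in \O_{K}$. By Lemma~\ref{lem:ds}, after multiplying by a suitable unit and possibly conjugating, every indecomposable integer is a semiconvergent $\alpha_{i,r} = \alpha_{i-1} + r\,\alpha_i$ with $0 \le r \le a_{i+1}$, the index parity of $i$ being chosen so that $N(\alpha_i) < 0$ (which is exactly what makes these elements totally positive). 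Thus the whole computation takes place within a single period of the continued fraction.

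The first step is to expand the norm as a quadratic in the integer $r$:
\[
N(\alpha_{i,r}) = N(\alpha_{i-1}) + r\,\Tr\!\left(\alpha_{i-1}\alpha_i'\right) + r^2 N(\alpha_i),
\]
which, since $N(\alpha_i) < 0$, is a downward parabola. Putting $A = \alpha_{i-1}\alpha_i'$ and $B = \alpha_{i-1}'\alpha_i$, the determinant identity $p_iq_{i-1} - p_{i-1}q_i = (-1)^{i-1}$ gives $A - B = (-1)^{i-1}(\theta - \theta')$, while $AB = N(\alpha_{i-1})N(\alpha_i)$, so that
\[
\left[\Tr\!\left(\alpha_{i-1}\alpha_i'\right)\right]^{2} = (A+B)^2 = (\theta-\theta')^{2} + 4N(\alpha_{i-1})N(\alpha_i).
\]
Feeding this into the vertex value of the parabola makes the $N(\alpha_{i-1})$ contributions cancel, leaving the clean identity
\[
\max_{r \in \mathbb{R}} N(\alpha_{i,r}) = \frac{(\theta-\theta')^{2}}{4\,|N(\alpha_i)|}.
\]
The quantities $|N(\alpha_i)|$ appearing here are precisely the magnitudes of the negative norms attained on $\O_{K}$, so their minimum over the period is $N$; moreover $(\theta-\theta')^{2}$ equals $4D$ when $D \equiv 2,3 \bmod 4$ and $D$ when $D \equiv 1 \bmod 4$.

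For $D \equiv 2,3 \bmod 4$ this finishes the argument with no appeal to the integrality of $r$: every value is at most the vertex $D/|N(\alpha_i)| \le D/N$. For $D \equiv 1 \bmod 4$ the vertex bound only yields $D/(4N)$, which overshoots the claimed $(D-1)/(4N)$ by $1/(4N)$, and recovering the sharper constant is the heart of the proof. Here I would use that $N(\alpha_{i,r}) \in \Z$ together with the congruence $\left[\Tr(\alpha_{i-1}\alpha_i')\right]^{2} = D + 4N(\alpha_{i-1})N(\alpha_i) \equiv 1 \bmod 4$, which forces $\Tr(\alpha_{i-1}\alpha_i')$ to be odd. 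On the family with $|N(\alpha_i)| = N$ the vertex then lies at $r^{\ast} = \Tr(\alpha_{i-1}\alpha_i')/(2N)$, an odd multiple of $1/(2N)$, so $|r - r^{\ast}| \ge 1/(2N)$ for every integer $r$; this displacement lowers the parabola by at least $N\left(1/(2N)\right)^{2} = 1/(4N)$, giving $N(\alpha_{i,r}) \le D/(4N) - 1/(4N) = (D-1)/(4N)$. For the remaining families one has $|N(\alpha_i)| \ge N+1$, so the vertex $D/(4|N(\alpha_i)|) \le D/(4(N+1))$ is already at most $(D-1)/(4N)$, using $N \le (D-1)/4$ and hence $D \ge N+1$.

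The main obstacle is exactly this $D \equiv 1 \bmod 4$ refinement: the continuous maximum $D/(4N)$ is too large, and closing the gap requires the arithmetic input that the vertex cannot be attained at an admissible integer $r$, quantified through the parity of $\Tr(\alpha_{i-1}\alpha_i')$. The surrounding bookkeeping is routine but must be handled with care: matching the index ranges in Lemma~\ref{lem:ds}, verifying that conjugation symmetry really does permit restricting to one parity of $i$, and confirming $\min_i |N(\alpha_i)| = N$, all complicated by the factors of $2$ that separate the lattice $\O_{K} = \Z[(1+\sqrt{D})/2]$ in the case $D \equiv 1 \bmod 4$ from the cases $D \equiv 2,3 \bmod 4$.
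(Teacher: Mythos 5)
Your proof is correct, and it is essentially the approach behind the paper's own machinery: this statement is quoted from \cite{JK} without proof here, but your vertex identity $\max_{r\in\mathbb{R}} N\left( \alpha_{i,r} \right) = \left( \theta-\theta' \right)^{2}/\left( 4\left| N\left( \alpha_{i} \right) \right| \right)$ is exactly the norm formula the paper derives inside the proof of Proposition~\ref{prop:r} (via Lemma~\ref{lem:tiab}), and your parity refinement for $D \equiv 1 \bmod 4$ --- that $\Tr\left( \alpha_{i-1}\alpha_{i}' \right)$ is odd, so any admissible integer $r$ sits at distance at least $1/\left( 2\left| N\left( \alpha_{i} \right) \right| \right)$ from the vertex, costing at least $1/\left( 4\left| N\left( \alpha_{i} \right) \right| \right)$ --- is the key step of Jang--Kim's argument. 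Two small repairs, neither of which affects the computation: for $D \equiv 1 \bmod 4$ the indecomposables of Lemma~\ref{lem:ds} are $p+q\delta$ with $\delta = \left( 1+\sqrt{D} \right)/2$, while the continued fraction being expanded is that of $\xi = \left( \sqrt{D}-1 \right)/2$, so $\alpha_{j}$ should be $p_{j}+q_{j}\delta$ rather than $p_{j}+q_{j}\theta$ with $\theta=\xi$; your identities survive verbatim since they use only $\delta-\delta' = \sqrt{D}$, $\Tr(\delta)=1$ and $N(\delta) = (1-D)/4 \in \Z$. Also, the bound needs only $\left| N\left( \alpha_{i} \right) \right| \geq N$, immediate from the definition of $N$; the equality ``$\min_{i}\left| N\left( \alpha_{i} \right) \right| = N$'' is true but not required, and your two-case split at $\left| N\left( \alpha_{i} \right) \right| = N$ versus $\geq N+1$ is avoidable, since $T$ is odd in every family, giving the uniform bound $(D-1)/\left( 4\left| N\left( \alpha_{i} \right) \right| \right) \leq (D-1)/(4N)$ directly.
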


It is important to mention that equality always holds in this result if the norm
of the fundamental unit in $K$ is equal to $-1$. Moreover, except for the mentioned
$N=1$ cases, this bound is lower than the bound given in \cite{DS}.

In the same paper, Jang and Kim also stated a conjecture improving their upper bound.

\begin{conj}[\cite{JK}]
\label{conj:JK}
Let $D$ and $N$ be as in Theorem~$\ref{thm:jk}$ and $a$ be the smallest nonnegative
rational integer such that $N$ divides $D-a^{2}$. Then
\[\def\arraystretch{2.2}
N(\alpha) \leq
\left\{
\begin{array}{ll}
	\displaystyle\frac{D-a^{2}}{4N} & \text{if $D \equiv 1 \bmod 4$}, \\
	\displaystyle\frac{D-a^{2}}{N}  & \text{if $D \equiv 2,3 \bmod 4$},
\end{array}
\right.
\]
for all indecomposable $\alpha\in \O_{K}$. 
\end{conj}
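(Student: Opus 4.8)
The plan is to reduce the conjecture to an inequality among the data of the continued fraction expansion, using the characterization of indecomposable integers in Lemma~\ref{lem:ds}. I would treat the case $D \equiv 2,3 \bmod 4$ first, where $\O_{K} = \Z[\sqrt{D}]$ and one expands $\sqrt{D} = [u_0; \overline{u_1, \dots, u_{s-1}, 2u_0}]$ with convergents $p_i/q_i$ and complete quotients $\omega_i = (P_i + \sqrt{D})/Q_i$. The standard identities $p_i^2 - D q_i^2 = (-1)^{i-1} Q_{i+1}$, $\; Q_{i-1}Q_i = D - P_i^2$, $\;0 < P_i < \sqrt{D}$, and $0 < Q_i < 2\sqrt{D}$ are the main tools. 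The first step is to translate Lemma~\ref{lem:ds} into the statement that the norm of every indecomposable integer is one of the denominators $Q_i$, so that $\max_{\alpha} N(\alpha) = \max_i Q_i =: Q_M$, the maximum being taken over one period. The case $D \equiv 1 \bmod 4$ is handled identically after replacing $\sqrt{D}$ by $(\sqrt{D}-1)/2$ and tracking the factor $4$ that enters through the discriminant $D$ and the norm form on $\Z[(1+\sqrt{D})/2]$.

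Second, I would identify $N$ and $a$ with continued-fraction quantities. Since the values $p_i^2 - D q_i^2 = (-1)^{i-1} Q_{i+1}$ alternate in sign and the achievable negative norms are exactly the $-Q_{i+1}$ of the corresponding parity, one has $N = \min\{Q_i\}$ taken over that parity class. The quantity $a$ then enters through the congruence $Q_i \mid D - P_i^2$, which shows that each $P_i$ at an index with $Q_i = N$ is a solution of $N \mid D - x^2$; by minimality of $a$ this yields $a \le P_i$ at every such index.

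Third comes the core inequality $N \cdot Q_M \le D - a^2$. Here the sign alternation is decisive: the neighbour $Q_{M-1}$ of the maximal denominator lies in the parity class opposite to $Q_M$, and if that neighbour lies in the class realising negative norms then $Q_{M-1} \ge N$ by the minimality defining $N$, whence
\[ N \cdot Q_M \;\le\; Q_{M-1} Q_M \;=\; D - P_M^2. \]
This already recovers Theorem~\ref{thm:jk} (take $a = 0$). To reach the refined bound one must replace $D - P_M^2$ by $D - a^2$, which requires $P_M \ge a$; by the second step this holds as soon as $N \mid D - P_M^2$, equivalently $N \mid Q_{M-1}Q_M$.

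I expect this last point to be the main obstacle. There is no structural reason that the index $M$ carrying the \emph{largest} denominator should be adjacent to an index carrying the \emph{smallest} one, nor that its companion value $P_M$ should be divisibility-compatible with $N$; the two extrema of the sequence $(Q_i)$ may sit far apart within the period, in which case only $Q_{M-1} \mid D - P_M^2$ is available and the passage from $P_M$ to $a$ breaks down. The crux of a proof is therefore to control the joint location of the extremal complete quotients along the period, or to produce an alternative index at which both $N \mid D - P_M^2$ and near-maximality of $Q_M$ hold simultaneously. Quantifying how far $N \cdot Q_M$ can exceed $D - a^2$ when this alignment fails is the delicate heart of the matter, and it is exactly here that a finer analysis of the residues $P_i \bmod N$ would be needed to either close the argument or expose the true size of the discrepancy.
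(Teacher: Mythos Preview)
The statement you are attempting to prove is a conjecture, not a theorem, and the paper does not prove it: it \emph{disproves} it. Kala had already given a counterexample for $D\equiv 2\bmod 4$, and the present paper exhibits the minimal counterexamples in each congruence class (Section~\ref{sec:min}) together with infinite families of $D$ for which the bound fails by an amount growing like a positive constant times $\sqrt{D}$ (Section~\ref{sec:family}). So any purported proof must contain a genuine gap.

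You have in fact located that gap yourself. Your argument reduces the conjecture to the divisibility $N\mid Q_{M-1}Q_{M}$ (equivalently, to $P_{M}\ge a$), and you correctly observe that there is no structural reason the index carrying the maximal $Q_{i}$ should sit adjacent to one carrying the minimal $Q_{i}$. This is exactly the mechanism behind the counterexamples: in the paper's notation, the index $i$ with $N_{i+1}=N$ and the index $j$ with $M_{j}=M$ are distinct, $N_{j+1}$ is only slightly larger than $N$, and $M_{j}$ exceeds the Jang-Kim bound. Your final sentence, about quantifying how far $N\cdot Q_{M}$ can exceed $D-a^{2}$ when the alignment fails, is precisely the content of Theorem~\ref{thm:MP}: using the two-sided estimates for $N_{i}$ and $M_{i}$ in Lemmas~\ref{lem:upper} and Proposition~\ref{prop:powerSeriesM}, the paper shows the discrepancy is $O(\sqrt{D})$, and the families in Section~\ref{sec:family} show this order is sharp (with conjectured optimal constants $1/8$ and $1/4$ in Conjecture~\ref{conj:MP}). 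So your outline is a sound diagnosis of \emph{why} the conjecture fails rather than a route to proving it.
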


This conjecture was disproved by Kala \cite[Theorem~4]{Ka} in the case $D \equiv 2 \bmod 4$
by giving a counterexample.

The main aim of this paper is to study the norms of indecomposable integers in
greater detail. In particular, we find the minimal $D$ in each congruence class
$D \equiv 1,2,3 \bmod 4$ that provides a counterexample to the Jang-Kim Conjecture;
provide infinite families of such counterexamples; and state a refined version
of the Jang-Kim Conjecture (see Conjecture~\ref{conj:MP} below). If true, our families of examples show that our
refined conjecture is best-possible.

\begin{conj}
\label{conj:MP}
Let $D$ and $N$ be as in Theorem~$\ref{thm:jk}$. Let $a$ be the smallest
nonnegative rational integer such that $a^{2} \equiv D \bmod N$ if $D \equiv 2,3 \bmod 4$
and such that $a^{2} \equiv D \bmod 4N$ if $D \equiv 1 \bmod 4$. Then
\[
\def\arraystretch{2.2}
N(\alpha) <
\left\{
\begin{array}{ll}
	\displaystyle\frac{D-a^{2}}{4N}+\frac{\sqrt{D}}{8} & \text{if $D \equiv 1 \bmod 4$}, \\
	\displaystyle\frac{D-a^{2}}{N}+\frac{\sqrt{D}}{4} & \text{if $D \equiv 2,3 \bmod 4$},
\end{array}
\right.
\]
for all indecomposable $\alpha \in \O_{K}$. 
\end{conj}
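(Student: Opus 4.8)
\emph{The plan is} to run the entire argument through the explicit description of the indecomposables in Lemma~\ref{lem:ds}. I treat $D \equiv 2,3 \bmod 4$ in detail; the case $D \equiv 1 \bmod 4$ is parallel after replacing $\sqrt D$ by $\left( \sqrt D-1 \right)/2$ and carrying the extra factors of $2$ through, which is exactly what turns $\sqrt D/4$ into $\sqrt D/8$. Write $\sqrt D = [a_0; \overline{a_1,\ldots}]$ with convergents $p_i/q_i$ and complete-quotient data normalized by $P_0=0$, $Q_0=1$, $P_{i+1}=a_iQ_i-P_i$, $Q_iQ_{i+1}=D-P_{i+1}^2$. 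By Lemma~\ref{lem:ds} every indecomposable is, up to a totally positive unit and conjugation, a semiconvergent $w_r=u_{2j-1}+r\,u_{2j}$ with $u_i=p_i+q_i\sqrt D$ and $0\le r\le a_{2j+1}$. Using $p_n^2-Dq_n^2=(-1)^{n+1}Q_{n+1}$ and $p_np_{n-1}-Dq_nq_{n-1}=(-1)^nP_{n+1}$ a direct computation gives
\[
N(w_r)=Q_{2j}-r^2Q_{2j+1}+2rP_{2j+1}=\frac{D-t^2}{Q_{2j+1}},\qquad t:=P_{2j+1}-rQ_{2j+1}.
\]
As $r$ runs over $0,\ldots,a_{2j+1}$ the integer $t$ runs through the residues $\equiv P_{2j+1}\pmod{Q_{2j+1}}$ lying in $[-P_{2j+2},P_{2j+1}]\subset(-\sqrt D,\sqrt D)$, and $t^2\equiv D\pmod{Q_{2j+1}}$. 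Thus the conjecture reduces to the purely continued-fraction statement that $(D-t^2)/Q_{2j+1}<(D-a^2)/N+\sqrt D/4$ for every index and every admissible offset $t$.

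\emph{First I would} dispose of the positions with $Q_{2j+1}=N$. There every admissible offset satisfies $t^2\equiv D\pmod N$ with $t\ne0$, so $|t|\ge a$ (the least residue of $|t|$ is a square root of $D$ modulo $N$, hence at least $a$, which we may take $\le N/2$); consequently $N(w_r)\le(D-a^2)/N$ and the bound holds with room to spare. The whole difficulty is concentrated in the positions with $Q_{2j+1}>N$, where a small admissible offset can lift $(D-t^2)/Q_{2j+1}$ above the Jang--Kim value. To attack these I would pass to a maximal-norm indecomposable $w$, which we may assume is attained at an interior $r$, together with its neighbour $w'$ (the adjacent semiconvergent, whose offset $t'$ satisfies $|t'|=Q-\tau$, writing $Q=Q_{2j+1}$ and $\tau=|t|\le Q/2$). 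The two norms $M=N(w)$ and $M'=N(w')$ are positive integers obeying the clean identities
\[
M+M'=2c-Q,\qquad MM'=c^2-D,\qquad c:=\frac{\tau(Q-\tau)+D}{Q}\in\Z_{>0},
\]
so that $M=(c-Q/2)+\sqrt{(c-Q/2)^2-(c^2-D)}$. These relations are the engine for the sharp constant: they express $M$ through the single integer $c$ and the size $Q$, reducing the task to bounding $c-Q/2$ and $c^2-D$.

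\emph{The hard part will be} proving that the excess $M-(D-a^2)/N$ never reaches $\sqrt D/4$, and this is exactly the point at which the statement remains conjectural. The naive chain $N(w)\le D/Q\le D/N$ with $N<2\sqrt D$ and $a\le N/2$ yields only $M\le(D-a^2)/N+N/4\le(D-a^2)/N+\sqrt D/2$, i.e.\ the weaker constant; recovering the missing factor of two is the crux. The intended mechanism is to feed the minimality $N=\min_{i\ \mathrm{odd}}Q_i$ into the identities above: a position with $Q>N$ carrying a very small offset $\tau$ would, through $Q_{2j}Q_{2j+1}=D-P_{2j+1}^2$ and the adjacency of the $Q_i$ along the period, force a neighbouring partial denominator below $N$, a contradiction. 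Quantifying this should produce a lower bound on $\tau$ that grows with $Q$, so that the two competing effects---$a^2/N$ wanting $N$ (hence $Q$) large, while large $Q$ forces $\tau$ large and $(D-\tau^2)/Q$ small---balance precisely at $\sqrt D/4$, in agreement with the infinite families that make the bound sharp. I expect the genuine obstacle to be making this balance rigorous \emph{uniformly}: the residue constraint $|t|\ge a$ is available only at the $Q=N$ positions and must be transported to neighbouring positions, which is delicate exactly when $N$ is composite (so $D$ has several square roots modulo $N$) and when the extremal position lies several steps away from a minimal one along the period. It is the uniform control of this transport, rather than any single inequality, that separates the provable $\sqrt D/2$ bound from the conjectured $\sqrt D/4$.
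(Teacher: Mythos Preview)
The statement is Conjecture~\ref{conj:MP}, which the paper explicitly does \emph{not} prove: the authors write ``We have not been able to prove this conjecture'' and instead establish the weaker Theorem~\ref{thm:MP} with error terms $\sqrt{D}$ and $2\sqrt{D}$ in place of $\sqrt{D}/8$ and $\sqrt{D}/4$. Their support for the conjecture is heuristic, resting on the computations of Section~\ref{sec:calcs} and the infinite families of Section~\ref{sec:family} that approach the conjectured constants from below. You correctly recognise that a full proof is out of reach and say so explicitly; your proposal is therefore not wrong, only incomplete in precisely the way the paper itself is.

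Where your sketch genuinely differs from the paper is in the weak bound you obtain en route. Your ``naive chain'' $M\le D/Q\le D/N$ combined with $a\le N/2$ and $N<2\sqrt{D}$ already gives $M<(D-a^{2})/N+\sqrt{D}/2$ for $D\equiv 2,3\bmod 4$, and the argument is essentially two lines (it is Theorem~\ref{thm:jk} followed by the trivial estimate $a^{2}/N\le N/4<\sqrt{D}/2$). This is four times sharper than the paper's Theorem~\ref{thm:MP}. The paper instead routes through the approximations of Sections~\ref{sec:ni}--\ref{sec:mi}, bounding $M_{j}-M_{i}$ via the $c_{i+2}$ estimates of Proposition~\ref{prop:powerSeriesM} and then appealing to Kala's Theorem~2(a); that machinery is what the authors built to locate and analyse counterexamples, but for the bare inequality your direct route is both shorter and stronger.

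The remaining gap you name---pushing $\sqrt{D}/2$ down to $\sqrt{D}/4$---is real and open. Your identities $M+M'=2c-Q$ and $MM'=c^{2}-D$ are correct, and the proposed mechanism (a position $Q>N$ with tiny offset $\tau$ forcing a neighbouring $Q_{i}<N$) is plausible, but as you yourself say, transporting the residue constraint $|t|\ge a$ away from the $Q=N$ position uniformly across all periods is exactly the obstruction, and neither you nor the paper overcomes it.
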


Notice that in addition to the extra term we have added, we have also corrected the main term for $D \equiv 1 \bmod 4$, as the definition of $a$ in Jang-Kim's formulation of the conjecture in that case does not seem to be right.

We have also been able to prove a result in the direction of our refined conjecture; namely that the Jang-Kim Conjecture is only wrong by at most $O \left( \sqrt{D} \right)$. We shall prove the following result.

\begin{theorem}
\label{thm:MP}
Let $a$, $D$ and $N$ be as in Conjecture~$\ref{conj:MP}$. Then
\[
\def\arraystretch{2.2}
N(\alpha) <
\left\{
\begin{array}{ll}
	\displaystyle\frac{D-a^{2}}{4N}+\sqrt{D} & \text{if $D \equiv 1 \bmod 4$}, \\
	\displaystyle\frac{D-a^{2}}{N}+2\sqrt{D}  & \text{if $D \equiv 2,3 \bmod 4$},
\end{array}
\right.
\]
for all indecomposable $\alpha \in \O_{K}$.
\end{theorem}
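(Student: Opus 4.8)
The plan is to read each indecomposable off Lemma~\ref{lem:ds} and reduce the stated bound to a single elementary inequality in the continued-fraction data. I do $D \equiv 2,3 \pmod 4$ first. Write $\sqrt{D} = [u_0; \overline{u_1, \dots, u_{s-1}, 2u_0}]$ with convergents $p_k/q_k$ and the usual quantities $P_k, Q_k$ (so $P_0 = 0$, $Q_0 = 1$), which satisfy $D - P_k^2 = Q_{k-1} Q_k$, $p_{k-1}^2 - D q_{k-1}^2 = (-1)^k Q_k$, and $p_k p_{k-1} - D q_k q_{k-1} = (-1)^k P_{k+1}$. By Lemma~\ref{lem:ds}, up to totally positive units and conjugation every indecomposable is
\[
\alpha_{i,r} = (r p_i + p_{i-1}) + (r q_i + q_{i-1}) \sqrt{D}, \qquad 0 \le r \le u_{i+1} .
\]

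The crux is the norm computation. Substituting the three identities gives $N(\alpha_{i,r}) = (-1)^i \left( Q_i + 2 r P_{i+1} - r^2 Q_{i+1} \right)$; the bracket is concave in $r$ and equals $Q_i > 0$ at $r = 0$ and $Q_{i+2} > 0$ at $r = u_{i+1}$, hence is positive throughout, so total positivity forces $i$ even. Completing the square and using $D = P_{i+1}^2 + Q_i Q_{i+1}$ then yields
\[
N(\alpha_{i,r}) = \frac{D - \left( P_{i+1} - r Q_{i+1} \right)^2}{Q_{i+1}} = \frac{D - b^2}{Q_{i+1}}, \qquad b := P_{i+1} - r Q_{i+1}.
\]
Since $i$ is even, the convergent $p_i + q_i \sqrt{D}$ has norm $-Q_{i+1} < 0$, so $-Q_{i+1}$ is a negative norm and therefore $Q_{i+1} \ge N$. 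Also $b \equiv P_{i+1} \pmod{Q_{i+1}}$ with $P_{i+1}^2 \equiv D \pmod{Q_{i+1}}$, whence $b^2 \equiv D \pmod{Q_{i+1}}$.

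Now I compare. Over the common denominator $N Q_{i+1}$,
\[
N(\alpha_{i,r}) - \frac{D - a^2}{N} = \frac{(N - Q_{i+1}) D - N b^2 + Q_{i+1} a^2}{N Q_{i+1}} \le \frac{a^2}{N},
\]
because $Q_{i+1} \ge N$ makes $(N - Q_{i+1}) D \le 0$, and $-N b^2 \le 0$. Finally the least nonnegative solution of $a^2 \equiv D \pmod N$ lies in $[0, N)$, so $a < N$ and hence $a^2/N < N$; and $N \le Q_1 = D - u_0^2 < 2 \sqrt{D}$. Thus $a^2/N < 2\sqrt{D}$ and $N(\alpha) < (D - a^2)/N + 2\sqrt{D}$, as claimed.

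The case $D \equiv 1 \pmod 4$ is parallel, run on the expansion of $(\sqrt{D}-1)/2$ with $\omega = (1+\sqrt{D})/2$ and norm form $N(x + y \omega) = \left( (2x + y)^2 - D y^2 \right)/4$. The analogue of the displayed identity is $N(\alpha) = (D - b^2)/(4 Q_{i+1})$ with $b$ odd and $b^2 \equiv D \pmod{4 Q_{i+1}}$, and again $Q_{i+1} \ge N$; here $a$ is the least nonnegative solution of $a^2 \equiv D \pmod{4N}$, and since $2N - a$ is also a solution we get $a \le N$, while the minimal negative norm obeys $N \le \sqrt{D} - 1$. The identical two-line comparison then gives $N(\alpha) - (D - a^2)/(4N) \le a^2/(4N) \le N/4 < \sqrt{D}$. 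I expect the main obstacle to be precisely this half-integer case: one must build the $(\sqrt{D}-1)/2$ expansion carefully, carry the parity of $b$ and the passage from modulus $N$ to $4N$, and establish the sharper bound $N < \sqrt{D}$. Pinning down the exact indexing and parity in Lemma~\ref{lem:ds} (which $i$ occur, and how the endpoints $r = 0, u_{i+1}$ are shared between consecutive ranges) is the remaining delicate point.
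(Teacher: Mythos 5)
Your proof is correct, but it is a genuinely different and considerably more elementary route than the paper's. The paper never compares $N(\alpha)$ to $(D-a^{2})/N$ directly: it proves a \emph{localization} statement, namely that for any odd $j$ and the odd $i$ with $N_{i+1}=N$ minimal, the bounds of Lemma~\ref{lem:upper} and Proposition~\ref{prop:powerSeriesM} give $c_{j+2}<c_{i+2}+1$ and hence $4\left( M_{j}-M_{i} \right)/\sqrt{\Delta}<4$, i.e.\ $M_{j}<M_{i}+2\sqrt{D}$ (resp.\ $M_{i}+\sqrt{D}$), and then invokes Kala's Theorem~2(a) to say that $M_{i}$ itself obeys the Jang--Kim bound. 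You instead write every indecomposable's norm in closed form as $\left( D-b^{2} \right)/Q_{i+1}$ with $b^{2}\equiv D \bmod Q_{i+1}$, and your ``comparison over the common denominator'' amounts to discarding $b^{2}\geq 0$, i.e.\ to $N(\alpha)\leq D/Q_{i+1}\leq D/N$ --- which is exactly Jang--Kim's Theorem~\ref{thm:jk} --- followed by the elementary observations $a<N$ (resp.\ $a\leq N$ via the reflection $a\mapsto 2N-a$ modulo $4N$) and $N\leq Q_{1}=D-u_{0}^{2}<2\sqrt{D}$ (resp.\ $N\leq \left( D-\left( 2u_{0}+1 \right)^{2} \right)/4<\sqrt{D}-1$). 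So your argument shows Theorem~\ref{thm:MP} is essentially a corollary of Theorem~\ref{thm:jk} plus three lines of arithmetic; it is self-contained (Sections~\ref{sec:ni}--\ref{sec:mi} and Kala's Theorem~2(a) are not needed), and in the $D\equiv 1 \bmod 4$ case it even yields the sharper error term $a^{2}/(4N)\leq N/4<\sqrt{D}/4$, within a factor of $2$ of Conjecture~\ref{conj:MP}. The details you flagged as delicate in the $1 \bmod 4$ case do check out: $N \left( \delta_{i,r} \right)\in\Z$ forces $4N_{i+1}\mid D-b^{2}$ with $b$ odd, and your $Q_{i+1}$ must be read as $\left| N \left( \delta_{i} \right) \right|$ (half the complete-quotient denominator of the expansion of $\left( \sqrt{D}-1 \right)/2$, which starts from $Q_{0}=2$) --- with that normalization everything is consistent. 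What the paper's heavier route buys, and yours does not, is the structural information that the extremal indecomposables lie within $O \left( \sqrt{D} \right)$ of $M_{i}$ at the minimal-norm index, which is precisely what motivates the refined constants $\sqrt{D}/8$ and $\sqrt{D}/4$ in Conjecture~\ref{conj:MP}.
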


Section~\ref{sec:pre} is devoted to study of indecomposable integers in $\Q \left( \sqrt{D} \right)$
where $D>1$ is a squarefree integer. We will introduce some notation as well as basic facts about continued fraction expansions and about algebraic integers in the considered real quadratic fields.

In Sections~\ref{sec:ni} and \ref{sec:mi}, we provide the estimates for the quantities used initially to find counterexamples to the Jang-Kim Conjecture and required to prove Theorem~\ref{thm:MP}.

Section~\ref{sec:calcs} contains the calculations which resulted in finding
counterexamples for $D \equiv 1,2,3 \bmod 4$. Moreover, our search is exhaustive
so the counterexamples provided here have minimal discriminants. Sections~\ref{sec:min} and \ref{sec:family} contain details about the minimal counterexamples and the families of counterexamples found, while in Section~\ref{sec:conj} we present a new conjecture that appears to be best-possible as well as the proof of Theorem~\ref{thm:MP}.

\section{Preliminaries}
\label{sec:pre}

Let $K$ be a real quadratic number field and let $\alpha'$ denote the conjugate
of $\alpha \in K$. Let $R$ be an order of discriminant $\Delta>0$ in $K$.
There exists a non-square positive integer $D$ such that $K = \Q \left( \sqrt{D} \right)$
and either $R = \Z \left[ \sqrt{D} \right]$ or $R = \Z \left[ \left( 1+\sqrt{D} \right)/2 \right]$.
For the most part we will work with $R=\O_{K}$, but since the results of
\cite{DS} are stated for any order, $R$, we do so initially here too.

Any $\alpha \in R$ is said to be \emph{totally positive}, denoted by $\alpha \succ 0$,
if both $\alpha$ and $\alpha'$ are positive. We write $R^{+}$ for the set of such
elements. We call $\alpha$ \emph{indecomposable} if it cannot be expressed as a
sum $\alpha=\beta+\gamma$ where $\beta,\gamma \in R^{+}$. This also indicates
that every element of $R^{+}$ can be written as a sum of finitely many
indecomposable integers. We will use the symbol $N(\alpha)$ to denote the norm
of any $\alpha \in K$.

We put
\[
\delta =
\left\{
\begin{array}{ll}
            \sqrt{D} & \text{if $R=\Z \left[ \sqrt{D} \right]$},\\
\displaystyle\frac{\sqrt{D}+1}{2} & \text{if $R=\Z \left[ \left( 1+\sqrt{D} \right)/2 \right]$},
\end{array}
\right.
\]
and
\[
\xi =
\left\{
\begin{array}{ll}
            \sqrt{D} & \text{if $R=\Z \left[ \sqrt{D} \right]$},\\
\displaystyle\frac{\sqrt{D}-1}{2} & \text{if $R=\Z \left[ \left( 1+\sqrt{D} \right)/2 \right]$}.
\end{array}
\right.
\]

For any continued fraction $\left[ u_{0},u_{1},\ldots \right]$ (not necessarily
a simple continued fraction), let us define
inductively
\begin{eqnarray*}
p \left( u_{0} \right) & = & u_{0}, \quad q \left( u_{0} \right)=1, \quad
p \left( u_{0}, u_{1} \right) = u_{1}u_{0}+1, \quad q \left( u_{0}, u_{1} \right) = u_{1}, \nonumber \\
p \left( u_{0}, u_{1},\ldots,u_{j} \right) & = & u_{j} p \left( u_{0},u_{1},\ldots,u_{j-1} \right) + p \left( u_{0},u_{1},\ldots,u_{j-2} \right), \\
q \left( u_{0}, u_{1},\ldots,u_{j} \right) & = & u_{j} q \left( u_{0},u_{1},\ldots,u_{j-1} \right) + q \left( u_{0},u_{1},\ldots,u_{j-2} \right), \nonumber
\end{eqnarray*}
for $j=2,3,\ldots$

Thus
\[
\left[ u_{0},u_{1},\ldots,u_{j} \right]
= \frac{p \left( u_{0},u_{1},\ldots,u_{j} \right)}{q \left( u_{0},u_{1},\ldots,u_{j} \right)}.
\]

Let $\left[ u_{0},\overline{u_{1},u_{2},\ldots,u_{s-1},u_{s}} \right]$ be the
simple continued fraction expansion of $\xi$, where $u_{s}=2u_{0}+1$ if
$d \equiv 1 \bmod 4$ and $u_{s}=2u_{0}$ if $d \equiv 2,3 \bmod 4$. This expansion
is periodic with $s$ as the length of the minimal period. Note that the sequence
$u_{1},u_{2},\ldots,u_{s-1}$ is symmetric (that is, $u_{1}=u_{s-1}$, $u_{2}=u_{s-2}$,\ldots).
We shall simplify the above notation here as follows,
\[
\frac{p_{i}}{q_{i}}=[u_{0},u_{1},\ldots,u_{i}],
\]
for $i \geq 0$. We shall also follow the usual convention of setting $p_{-1}=1$
and $q_{-1}=0$. This fraction is called the \emph{$i$-th convergent} of $\xi$
and we set
\[
\delta_{i}=p_{i}+q_{i}\delta,
\]
for $i \geq -1$.

These $\delta_{i}$ are elements of the ring $\O_{K}$.
The $\delta_{i}$'s are also related to another group of special rational integers.
We put
\[
\frac{p_{i,r}}{q_{i,r}}=\left[ u_{0},u_{1},\ldots,u_{i+1}, r \right],
\]
where $i \geq -1$ and $0 \leq r \leq u_{i+2}$ and call such rational numbers
\emph{semi-convergents}. So here
\[
\delta_{i,r}=p_{i,r}+q_{i,r}\delta.
\]

These semi-convergents
are of interest to us here due to the following result. It follows readily from
\cite[\S~16]{Pe}, but appears to have been first explicitly stated and proven
by Dress and Scharlau \cite{DS}.

\begin{lemma}
\label{lem:ds}
If $\xi=\left[ u_{0}, u_{1}, \ldots \right]$ is the simple continued fraction
expansion of $\xi$, then the indecomposable elements, $\alpha$, in $R^{+}$ are of
the form $\alpha=1$ or $\alpha=p \left( u_{0},\ldots,u_{i},r \right) + q \left( u_{0},\ldots,u_{i},r \right) \delta$
or $\alpha=p \left( u_{0},\ldots,u_{i},r \right) + q \left( u_{0},\ldots,u_{i},r \right) \delta'$
with $i \geq 0$, $i \equiv 0 \bmod 2$ and $r=1,2,\ldots, u_{i+1}$.
\end{lemma}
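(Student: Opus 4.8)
The plan is to reduce the statement to a purely geometric criterion for indecomposability and then to identify the resulting lattice points with the semi-convergents of $\xi$. First I would embed $R$ into $\mathbb{R}^{2}$ by $\alpha \mapsto (\alpha, \alpha')$, so that $R^{+}$ becomes the set of lattice points lying strictly inside the first quadrant. Since $\gamma = \alpha - \beta$ automatically lies in $R$, an element $\alpha \in R^{+}$ is decomposable precisely when there is some $\beta \in R$ with $\beta \succ 0$ and $\alpha - \beta \succ 0$; written out in coordinates, this says exactly that the open rectangle $(0,\alpha) \times (0,\alpha')$ contains a point of the lattice. Hence $\alpha$ is indecomposable if and only if this rectangle is empty. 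Because $\delta$ is irrational, two distinct elements of $R$ never share a coordinate, so the empty-rectangle condition is equivalent to $\alpha$ being minimal in $R^{+}$ for the product order (no other totally positive element has both coordinates no larger). The element $\alpha = 1$, which maps to $(1,1)$, is the unique minimal point adjacent to the corner and is best treated separately, as it corresponds to the excluded index $i = -1$.

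Next I would show that these minimal points are exactly the lattice points on the boundary of the convex hull of $R^{+}$, and identify them with the objects in the statement. Recall that the convergents $p_{i}/q_{i}$ of $\xi$ approach $\xi$ alternately from below (even $i$) and from above (odd $i$). For even $i$, the numbers $[u_{0},\ldots,u_{i},r]$ with $r = 1,\ldots,u_{i+1}$ interpolate between the two consecutive odd-index convergents $p_{i-1}/q_{i-1}$ and $p_{i+1}/q_{i+1}$, and thus all exceed $\xi$; consequently $p \left( u_{0},\ldots,u_{i},r \right) + q \left( u_{0},\ldots,u_{i},r \right) \delta'$ is positive and the corresponding element is totally positive. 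Following Klein's geometric interpretation of the continued fraction, these are precisely the lattice points on the part of the hull boundary near one coordinate axis, with the convergents as its vertices, so the parity condition $i \equiv 0 \bmod 2$ is exactly what forces the semi-convergent onto the correct side of $\xi$. The second family, obtained by replacing $\delta$ with $\delta'$, is the set of conjugates of the first and accounts for the boundary near the other axis, by the symmetry $\alpha \leftrightarrow \alpha'$ of the empty-rectangle criterion. I would then verify that every minimal point arises in exactly one of the two lists and, conversely, that each listed element genuinely has an empty defining rectangle, using the determinant identity $p_{i}q_{i-1} - p_{i-1}q_{i} = \pm 1$ together with the recursions for $p(\cdot)$ and $q(\cdot)$ recorded above.

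The periodicity of the continued fraction, with the symmetric block $u_{1},\ldots,u_{s-1}$ and the terminal partial quotient $u_{s} = 2u_{0}$ or $2u_{0}+1$, then guarantees that running over all even $i \geq 0$ reproduces the entire infinite set of indecomposables rather than a single period, since convergents one period apart differ by multiplication by the totally positive fundamental unit. Throughout I would keep the two cases $R = \mathbb{Z}[\sqrt{D}]$ and $R = \mathbb{Z}[(1+\sqrt{D})/2]$ in parallel, with $\xi$ and $\delta$ as in the definitions above. The main obstacle is the second step: establishing rigorously that the empty-rectangle points coincide edge-for-edge with the even-index semi-convergents, and in particular pinning down the exact range $r = 1,\ldots,u_{i+1}$ and the parity so that no minimal point is omitted or double-counted (including the point nearest the diagonal $y=x$, where the two lists meet). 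This bookkeeping, rather than any single hard inequality, is where the real care is needed, and it is exactly the content that Perron develops in \S~16 and that Dress and Scharlau reorganize in \cite{DS}.
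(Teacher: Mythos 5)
Your proposal is correct in outline, but note that the paper does not actually prove this lemma: its entire proof is the one-line citation ``This is the second part of Theorem~2 in \cite{DS}.'' What you have written is a reconstruction of the argument behind that citation, and it is the standard (and right) one: the empty-rectangle criterion --- $\alpha \in R^{+}$ is decomposable exactly when the open box $\left( 0,\alpha \right) \times \left( 0,\alpha' \right)$ contains a lattice point, so indecomposable exactly when minimal for the product order --- followed by Klein's identification of the minimal points with the lattice points on the boundary of the convex hull, whose vertices are the odd-index convergents and whose remaining edge points are the even-index semi-convergents; this is essentially how Dress and Scharlau organize their own proof, resting on the best-approximation property of semi-convergents from Perron \cite{Pe}. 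Your parity bookkeeping is sound: for even $i$, the identity $p_{i}q_{i-1}-p_{i-1}q_{i}=(-1)^{i-1}$ shows that $r \mapsto \left( p_{i-1}+rp_{i} \right) / \left( q_{i-1}+rq_{i} \right)$ decreases from $p_{i-1}/q_{i-1}$ to $p_{i+1}/q_{i+1}$, so these semi-convergents all exceed $\xi$ and the listed elements are totally positive. One sentence needs repair: ``convergents one period apart differ by multiplication by the totally positive fundamental unit'' is literally true only when $s$ is even; for odd $s$ one has $N \left( \delta_{s-1} \right) = -1$ and the totally positive unit corresponds to a shift by $2s$ --- harmless here, since the lemma quantifies over all even $i \geq 0$ rather than over one period. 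Finally, the step you yourself flag as the real work (that the minimal points coincide edge-for-edge with the listed semi-convergents, with the exact range $r=1,\ldots,u_{i+1}$) is precisely the content of the cited theorem, so your text remains a faithful plan rather than a complete proof --- though it supplies strictly more of the argument than the paper itself does.
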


\begin{proof}
This is the second part of Theorem~2 in \cite{DS}.
\end{proof}

Since $\delta_{i} \succ 0$ only for $i$ odd, we will use the symbol $N_{i}$ to
denote the absolute value of the norm of $\delta_{i}$, i.e.,
$N_{i}= \left| N \left( \delta_{i} \right) \right|
=(-1)^{i+1}N \left( \delta_{i} \right)$. In what follows, let
\begin{align*}
M_{i} & = \max \left\{ N \left( \delta_{i,r} \right) ; N \left( \delta_{i} \right)>0 \text{ and } 0 \leq r \leq u_{i+2} \right\}, \\
M & = \max \left\{ M_{i} ; i \right\}, \\
N & = \min \left\{ |N(\alpha)|;\;\alpha\in\O_{K} \text{ such that } N(\alpha)<0 \right\}
= \min \left\{ N_{i};\;i \text{ even} \right\}.
\end{align*}

Sometimes we will use $M_{D}$ and $N_{D}$, or similar indexing to indicate $M$ and $N$, especially in Section~\ref{sec:family}.

Throughout this paper, we will denote by $c_{i}$ the expression
\[
c_{i}=\left[ u_{i},u_{i+1},u_{i+2},\ldots \right].
\]

Thus
\begin{equation}
\label{eq:ci}
c_{i}=\left[ u_{i},u_{i+1},u_{i+2},\ldots \right]
= u_{i}+\frac{1}{\left[ u_{i+1},u_{i+2},\ldots \right]}=u_{i}+\frac{1}{c_{i+1}}.
\end{equation}

Therefore, we also have the equation
\[
\xi=\frac{c_{i+1}p_{i}+p_{i-1}}{c_{i+1}q_{i}+q_{i-1}}.
\]

Let $T_{i}^{(a)}$ and $T_{i}^{(b)}$ be the two quantities given by
\begin{align*}
T_{i}^{(a)} &= p_{i}p_{i-1}+p_{i}q_{i-1}\Tr(\delta)+q_{i}q_{i-1}N(\delta), \\
T_{i}^{(b)} &= p_{i}p_{i-1}+p_{i-1}q_{i}\Tr(\delta)+q_{i}q_{i-1}N(\delta).
\end{align*}

These two expressions satisfy $\delta_{i+1}'\delta_{i}=T_{i+1}^{(a)}+(-1)^{i}\xi
=T_{i+1}^{(b)}+(-1)^{i}\delta=T_{i+1}^{(b)}-(-1)^{i}\xi'$. 

To estimate the values of the $M_{i}$'s and $N_{i}$'s, we will need to establish several relations involving numbers $T_{i}^{(a)}$ and $T_{i}^{(b)}$. 
In the following lemma, we will express $T_{i}^{(a)}$
and $T_{i}^{(b)}$ using $\sqrt{D}$, $c_{i+1}$ and the norm of $\delta_{i}$.
This is the generalization of Lemma~5b) of \cite{Ka} for all squarefree integers $D>1$.

\begin{lemma}
\label{lem:tiab}
Suppose $D>1$ is a squarefree integer. For each $i \in \N_{0}$ we have
\begin{align*}
T_{i}^{(a)} &= (-1)^{i+1}\delta-c_{i+1}N(\delta_{i}),\\
T_{i}^{(b)} &= (-1)^{i+1}\xi-c_{i+1}N(\delta_{i}).
\end{align*}
\end{lemma}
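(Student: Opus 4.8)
The plan is to reduce everything to two algebraic facts and then track signs carefully. The first fact is the uniform conjugate relation: in both of the two possible orders $R=\Z[\sqrt{D}]$ and $R=\Z[(1+\sqrt{D})/2]$ one checks directly from the definitions of $\delta$ and $\xi$ that $\delta'=-\xi$ and, symmetrically, $\xi'=-\delta$. The second fact is the standard continued-fraction determinant identity $p_{i}q_{i-1}-p_{i-1}q_{i}=(-1)^{i+1}$, which follows by an immediate induction from the recurrences for $p_{i},q_{i}$ (with the convention $p_{-1}=1$, $q_{-1}=0$). Since $\delta_{j}'=p_{j}+q_{j}\delta'=p_{j}-q_{j}\xi$, these let me rewrite all the products $\delta_{i}\delta_{i-1}'$ and $\delta_{i}'\delta_{i-1}$ appearing implicitly in $T_{i}^{(a)},T_{i}^{(b)}$ in terms of $\xi$, $\delta$ and the norms.

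For $T_{i}^{(a)}$, the key step is to extract a relation between $c_{i+1}$ and the conjugates. Starting from the displayed identity $\xi=\frac{c_{i+1}p_{i}+p_{i-1}}{c_{i+1}q_{i}+q_{i-1}}$ and clearing denominators gives $c_{i+1}(\xi q_{i}-p_{i})=p_{i-1}-\xi q_{i-1}$, which by the formula $\delta_{j}'=p_{j}-q_{j}\xi$ is exactly $c_{i+1}\delta_{i}'=-\delta_{i-1}'$. Multiplying through by $\delta_{i}$ and using $N(\delta_{i})=\delta_{i}\delta_{i}'$ yields $c_{i+1}N(\delta_{i})=-\delta_{i}\delta_{i-1}'$. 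Separately, expanding $\delta_{i}\delta_{i-1}'=(p_{i}+q_{i}\delta)(p_{i-1}+q_{i-1}\delta')$, substituting $\delta'=\Tr(\delta)-\delta$ in the cross terms and applying the determinant identity, gives $\delta_{i}\delta_{i-1}'=T_{i}^{(a)}+(-1)^{i}\delta$; equivalently this is the conjugate of the product relation $\delta_{i+1}'\delta_{i}=T_{i+1}^{(a)}+(-1)^{i}\xi$ already recorded before the lemma. Combining the two expressions for $\delta_{i}\delta_{i-1}'$ gives $T_{i}^{(a)}+(-1)^{i}\delta=-c_{i+1}N(\delta_{i})$, that is, $T_{i}^{(a)}=(-1)^{i+1}\delta-c_{i+1}N(\delta_{i})$, as claimed.

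For $T_{i}^{(b)}$ I would avoid a second product computation and instead use the difference of the two definitions: $T_{i}^{(a)}-T_{i}^{(b)}=(p_{i}q_{i-1}-p_{i-1}q_{i})\Tr(\delta)=(-1)^{i+1}\Tr(\delta)$. Subtracting this from the formula just proved and using $\delta-\Tr(\delta)=-\delta'=\xi$ collapses the $\delta$ and $\Tr(\delta)$ terms into $(-1)^{i+1}\xi$, giving $T_{i}^{(b)}=(-1)^{i+1}\xi-c_{i+1}N(\delta_{i})$. The computations here are routine, so the main obstacle is purely bookkeeping: keeping the parities $(-1)^{i}$ versus $(-1)^{i+1}$ straight and confirming that $\delta'=-\xi$ and $\xi'=-\delta$ hold simultaneously in both orders, since those sign conventions are exactly what make the $T^{(a)}$/$T^{(b)}$ pair line up with $\delta$ and $\xi$ respectively rather than getting swapped.
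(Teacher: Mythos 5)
Your proof is correct and follows essentially the same route as the paper: both start from the identity $\xi=\frac{c_{i+1}p_{i}+p_{i-1}}{c_{i+1}q_{i}+q_{i-1}}$, introduce $N(\delta_{i})$ by multiplying through by $\delta_{i}$ (the paper rationalizes the fraction $\frac{-p_{i-1}-q_{i-1}\delta'}{p_{i}+q_{i}\delta'}$ instead, which is the same manipulation), and invoke $\delta'=-\xi$ together with the determinant identity $p_{i}q_{i-1}-p_{i-1}q_{i}=(-1)^{i-1}$. The only difference is cosmetic: the paper repeats the full expansion a second time to get $T_{i}^{(b)}$, whereas you derive it from $T_{i}^{(a)}$ via $T_{i}^{(a)}-T_{i}^{(b)}=(-1)^{i+1}\Tr(\delta)$ and $\delta-\Tr(\delta)=\xi$, a slight streamlining of the same argument.
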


\begin{proof}
The main idea of the proof is to consider the equation
\[
\xi=\frac{c_{i+1}p_{i}+p_{i-1}}{c_{i+1}q_{i}+q_{i-1}}
\]
and use it to express $c_{i+1}$ as
\begin{align*}
c_{i+1} &= \frac{-p_{i-1}+q_{i-1}\xi}{p_{i}-q_{i}\xi}
 = \frac{-p_{i-1}-q_{i-1}\delta'}{p_{i}+q_{i}\delta'}
 = \frac{-p_{i-1}-q_{i-1}\delta'}{p_{i}+q_{i}\delta'}\frac{p_{i}+q_{i}\delta}{p_{i}+q_{i}\delta} \\
&= \frac{-p_{i-1}p_{i}-p_{i}q_{i-1}\delta'-p_{i-1}q_{i}\delta-q_{i}q_{i-1}N(\delta)}{N(\delta_{i})} \\
&= \frac{-p_{i-1}p_{i}-p_{i}q_{i-1}\Tr(\delta)+\left( -p_{i-1}q_{i}+p_{i}q_{i-1} \right) \delta -q_{i}q_{i-1}N(\delta)}{N(\delta_{i})} \\
&= \frac{-T_{i}^{(a)}+(-1)^{i-1} \delta}{N(\delta_{i})}.
\end{align*}

To obtain the last equality here, we used the relation
\begin{equation}
\label{eq:pi1q1-rel}
p_{i}q_{i-1}-p_{i-1}q_{i}=(-1)^{i-1}.
\end{equation}

From this we can conclude that the expression for $T_{i}^{(a)}$ holds.

We proceed similarly for $T_{i}^{(b)}$, writing $c_{i+1}$ as
and use it to express $c_{i+1}$ as
\begin{align*}
c_{i+1} &= \frac{-p_{i-1}+q_{i-1}\xi}{p_{i}-q_{i}\xi}
 = \frac{-p_{i-1}-q_{i-1}\delta'}{p_{i}+q_{i}\delta'}
 = \frac{-p_{i-1}-q_{i-1}\delta'}{p_{i}+q_{i}\delta'}\frac{p_{i}+q_{i}\delta}{p_{i}+q_{i}\delta} \\
&= \frac{-p_{i-1}p_{i}-p_{i}q_{i-1}\delta'-p_{i-1}q_{i}\delta-q_{i}q_{i-1}N(\delta)}{N(\delta_{i})} \\
&= \frac{-p_{i-1}p_{i}-p_{i-1}q_{i}\Tr(\delta)+\left( -p_{i}q_{i-1}+p_{i-1}q_{i} \right) \delta' -q_{i}q_{i-1}N(\delta)}{N(\delta_{i})} \\
&= \frac{-T_{i}^{(b)}+(-1)^{i} \delta'}{N(\delta_{i})}.
\end{align*}
\end{proof}

%
%
%
%
%
%
%

\section{Approximations of the $N_{i}$'s}
\label{sec:ni}

In this section, we will be concerned with determining the values of $N_{i}$. We
start with a recurrence relation for the norms $N_{i}$. This result is a generalization
of Proposition~5c) in \cite{Ka}.

\begin{lemma}
\label{lem:Ni-rel}
For any squarefree integer $D>1$ and for each $i\in\N_{0}$, we have
\[\def\arraystretch{2.2}
N_{i}=\frac{\sqrt{\Delta}}{c_{i+1}}-\frac{N_{i-1}}{c_{i+1}^{2}}.
\]
\end{lemma}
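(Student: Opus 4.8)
The plan is to recast the claimed identity as a statement about the \emph{signed} norms $N(\delta_i)$ and the auxiliary quantities $T_i^{(a)},T_i^{(b)}$, for which Lemma~\ref{lem:tiab} already supplies closed forms, and then to feed in the three-term recurrences for the convergents. Throughout I would use the two elementary identities $\delta'=-\xi$ and $\delta+\xi=\sqrt{\Delta}$, both immediate from the definitions in each case: for $R=\mathbb{Z}\left[\sqrt{D}\right]$ one has $\delta=\xi=\sqrt{D}$ and $\Delta=4D$, while for $R=\mathbb{Z}\left[\left(1+\sqrt{D}\right)/2\right]$ one has $\delta=\left(\sqrt{D}+1\right)/2$, $\xi=\left(\sqrt{D}-1\right)/2$ and $\Delta=D$. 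I would also record the sign conversions $N(\delta_i)=(-1)^{i+1}N_i$ and $N(\delta_{i-1})=(-1)^{i}N_{i-1}$ at the outset, so that the final bookkeeping is transparent.

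The first real step is to establish a \emph{shift relation} expressing $T_i^{(a)}$ in terms of data at level $i-1$. Substituting the recurrences $p_i=u_ip_{i-1}+p_{i-2}$ and $q_i=u_iq_{i-1}+q_{i-2}$ into the definition of $T_i^{(a)}$ and regrouping, the terms carrying $u_i$ collect exactly into $u_i\bigl(p_{i-1}^2+p_{i-1}q_{i-1}\Tr(\delta)+q_{i-1}^2N(\delta)\bigr)=u_iN(\delta_{i-1})$, and the remaining terms are precisely $T_{i-1}^{(b)}$ — it is the $(b)$-variant that survives, since the leftover mixed term is $p_{i-2}q_{i-1}\Tr(\delta)$. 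This gives
\[
T_i^{(a)} = u_iN(\delta_{i-1}) + T_{i-1}^{(b)}.
\]

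Next I would replace both $T$'s by their closed forms from Lemma~\ref{lem:tiab}, namely $T_i^{(a)}=(-1)^{i+1}\delta-c_{i+1}N(\delta_i)$ and $T_{i-1}^{(b)}=(-1)^{i}\xi-c_iN(\delta_{i-1})$, turning the shift relation into $(-1)^{i+1}\delta-c_{i+1}N(\delta_i)=(u_i-c_i)N(\delta_{i-1})+(-1)^{i}\xi$. The single most useful simplification is $u_i-c_i=-1/c_{i+1}$, which is just \eqref{eq:ci} rearranged. Substituting this, and using $\delta+\xi=\sqrt{\Delta}$ to merge the irrational terms $(-1)^{i+1}\delta$ and $(-1)^{i+1}\xi$ into $(-1)^{i+1}\sqrt{\Delta}$, yields
\[
c_{i+1}N(\delta_i) = (-1)^{i+1}\sqrt{\Delta} + \frac{N(\delta_{i-1})}{c_{i+1}}.
\]
Finally, inserting $N(\delta_i)=(-1)^{i+1}N_i$ and $N(\delta_{i-1})=(-1)^{i}N_{i-1}$, dividing by $(-1)^{i+1}$ and then by $c_{i+1}$, produces exactly $N_i=\sqrt{\Delta}/c_{i+1}-N_{i-1}/c_{i+1}^2$.

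I expect the main difficulty to be organizational rather than conceptual: making the two independent families of signs — the parity factors $(-1)^{i}$ from the convergent index, and the $(a)\leftrightarrow(b)$ swap in the shift relation — line up correctly, and confirming that the base case $i=0$ is covered by the usual conventions $p_{-1}=1$, $q_{-1}=0$ together with $p_{-2}=0$, $q_{-2}=1$. These give $N(\delta_{-1})=N(1)=1$ and $T_{-1}^{(b)}=0$, so the shift relation reads $T_0^{(a)}=u_0$, in agreement with a direct computation. Once the shift relation and the identity $u_i-c_i=-1/c_{i+1}$ are in hand, every remaining step is forced.
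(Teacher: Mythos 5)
Your proof is correct and follows essentially the same route as the paper: both derive the shift relation $T_{i}^{(a)}=u_{i}N\left(\delta_{i-1}\right)+T_{i-1}^{(b)}$ from the convergent recurrences, substitute the closed forms of Lemma~\ref{lem:tiab}, and finish with $u_{i}-c_{i}=-1/c_{i+1}$, $\delta+\xi=\sqrt{\Delta}$ and the sign conversion $N_{i}=(-1)^{i+1}N\left(\delta_{i}\right)$. The only (cosmetic) differences are that the paper derives the relation at index $i+1$ and then shifts it down, whereas you derive it directly at index $i$, and you are more explicit than the paper about the base case $i=0$ via the conventions $p_{-2}=0$, $q_{-2}=1$ and $N_{-1}=1$.
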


\begin{proof}
The main idea of the proof is based on the definitions of $T_{i}^{(a)}$ and $T_{i}^{(b)}$ and two relations given by Lemma~\ref{lem:tiab}. Using the mentioned definition we can express $T_{i+1}^{(a)}$ as
\[
T_{i+1}^{(a)}=p_{i+1}p_{i} + p_{i+1}q_{i}\Tr(\delta)+q_{i}q_{i+1}N(\delta).
\]

We know that $p_{i+1}=u_{i+1}p_{i}+p_{i-1}$ and $q_{i+1}=u_{i+1}q_{i}+q_{i-1}$. If we use these linear recurrences in the previous equation, we get
\begin{align*}
T_{i+1}^{(a)} &= u_{i+1} \left( p_{i}^{2}+p_{i}q_{i}\Tr(\delta)+q_{i}^{2}N(\delta) \right) + p_{i-1} \left( p_{i}+q_{i} \Tr(\delta) \right) + q_{i}q_{i-1}N(\delta).
\end{align*}

Since $N \left( \delta_{i} \right) = p_{i}^{2}+p_{i}q_{i}\Tr(\delta)+q_{i}^{2}N(\delta)$ and $T_{i}^{(b)}=p_{i-1}p_{i}+p_{i-1}q_{i}\Tr(\delta)+q_{i}q_{i-1}N(\delta)$, we have
\[
T_{i+1}^{(a)}=u_{i+1}N \left( \delta_{i} \right) + T_{i}^{(b)}.
\]

Considering this relation for $i-1$, replacing $T_{i}^{(a)}$ and $T_{i-1}^{(b)}$ by the expressions given by Lemma~\ref{lem:tiab} and using \eqref{eq:ci}, we conclude that
\[
(-1)^{i+1}\delta - N \left( \delta_{i} \right)c_{i+1} = (-1)^{i}\xi - \left( u_{i}-c_{i} \right) N \left( \delta_{i-1} \right) = (-1)^{i}\xi - \frac{N \left( \delta_{i-1} \right)}{c_{i+1}}.
\]

Hence
\[
N \left( \delta_{i} \right)c_{i+1}
= (-1)^{i+1} \left( \delta+\xi \right) + \frac{N\left( \delta_{i-1} \right)}{c_{i+1}}.
\]

Since $N_{i}=(-1)^{i+1}N\left( \delta_{i} \right)$, we have
\[
N_{i}=\frac{\delta+\xi}{c_{i+1}}-\frac{N_{i-1}}{c_{i+1}^{2}}.
\]

From $\delta+\xi=\sqrt{\Delta}$, the lemma follows immediately.
\end{proof}

We proceed with upper and lower bounds on $N_{i}$. The inequalities in
Lemma~\ref{lem:upper} are analogous to Proposition~5d) and Proposition~6 of
\cite{Ka}.

\begin{lemma}
\label{lem:upper}
For each $i \in \N_{0}$, we have
\[\def\arraystretch{2.2}
\frac{N_{i}}{c_{i+2}}
<\frac{\sqrt{\Delta}}{2}.
\]

Moreover, we have
\begin{equation}
\label{eq:bnd2}
\frac{\sqrt{\Delta}}{c_{i+1}}\left(1-\frac{1}{c_{i}c_{i+1}}\right)<N_{i}
<\frac{\sqrt{\Delta}}{c_{i+1}}
\end{equation}
as well as
\begin{equation}
\label{eq:LB3}
\frac{\sqrt{\Delta}}{c_{i+1}+1} < N_{i}.
\end{equation}
\end{lemma}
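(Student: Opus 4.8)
The plan is to derive all three bounds from the recurrence $N_i = \sqrt{\Delta}/c_{i+1} - N_{i-1}/c_{i+1}^2$ established in Lemma~\ref{lem:Ni-rel}, together with elementary facts about the $c_j$. The key structural observation is that the $c_j$ are the complete quotients of the continued fraction of $\xi$, so each $c_j > 1$ (in fact $c_j = u_j + 1/c_{j+1} > u_j \geq 1$ for $j \geq 1$, and the relation \eqref{eq:ci} lets me relate consecutive $c_j$). Since $N_i$ is the absolute value of a norm, I have $N_i > 0$ for every $i$; this positivity is what drives the whole argument.

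First I would prove the upper bound $N_i < \sqrt{\Delta}/c_{i+1}$ in \eqref{eq:bnd2}. This is immediate from the recurrence: since $N_{i-1} > 0$ and $c_{i+1}^2 > 0$, the subtracted term is positive, so $N_i = \sqrt{\Delta}/c_{i+1} - N_{i-1}/c_{i+1}^2 < \sqrt{\Delta}/c_{i+1}$. Next, for the lower bound in \eqref{eq:bnd2}, I would feed the upper bound for $N_{i-1}$ (namely $N_{i-1} < \sqrt{\Delta}/c_i$) back into the recurrence to get
\[
N_i > \frac{\sqrt{\Delta}}{c_{i+1}} - \frac{1}{c_{i+1}^2}\cdot\frac{\sqrt{\Delta}}{c_i}
= \frac{\sqrt{\Delta}}{c_{i+1}}\left(1 - \frac{1}{c_i c_{i+1}}\right),
\]
which is exactly the claimed left-hand inequality. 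The boundary case $i=0$ needs a separate check, using the convention $N_{-1}$ and the value $c_1$, but this should be routine.

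For the first displayed inequality $N_i/c_{i+2} < \sqrt{\Delta}/2$, I would combine the upper bound just proved with a lower bound on $c_{i+1}$ in terms of $c_{i+2}$. From \eqref{eq:ci} we have $c_{i+1} = u_{i+1} + 1/c_{i+2}$, and since $u_{i+1} \geq 1$ and $c_{i+2} > 1$, one gets $c_{i+1} > 1 + 1/c_{i+2} > 2/c_{i+2}$ (because $c_{i+2} > 1$ forces $1 > 1/c_{i+2}$, hence $1 + 1/c_{i+2} > 2/c_{i+2}$). Then
\[
\frac{N_i}{c_{i+2}} < \frac{\sqrt{\Delta}}{c_{i+1}c_{i+2}} < \frac{\sqrt{\Delta}}{2},
\]
as desired. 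Finally, for \eqref{eq:LB3}, I would again return to the recurrence and bound $N_{i-1}$ from above, but now I would use the sharper comparison of the two error terms: writing $N_i = (\sqrt{\Delta}/c_{i+1})(1 - N_{i-1}/(\sqrt{\Delta}\,c_{i+1}))$ and inserting $N_{i-1} < \sqrt{\Delta}/c_i$ together with the relation $c_{i+1}c_i > c_{i+1} + 1$ (which follows from $c_i = u_i + 1/c_{i+1}$ so that $c_i c_{i+1} = u_i c_{i+1} + 1 \geq c_{i+1} + 1$ since $u_i \geq 1$), I can show $1 - N_{i-1}/(\sqrt{\Delta}\,c_{i+1}) > c_{i+1}/(c_{i+1}+1)$, giving $N_i > \sqrt{\Delta}/(c_{i+1}+1)$.

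The main obstacle I anticipate is handling the base case $i=0$ cleanly and verifying that $u_j \geq 1$ for all the indices $j$ actually used — the symmetry and the specific value $u_s = 2u_0$ or $2u_0+1$ guarantee the partial quotients are positive, but the argument for \eqref{eq:LB3} leans on $u_i \geq 1$, so I must make sure that $u_i$ (as opposed to just $u_{i+1}$) is a genuine partial quotient bounded below by $1$, which is where the periodicity of the expansion of $\xi$ needs to be invoked carefully rather than taken for granted.
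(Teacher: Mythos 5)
Your proofs of the first inequality and of both bounds in \eqref{eq:bnd2} are essentially identical to the paper's: the upper bound is immediate from Lemma~\ref{lem:Ni-rel} and positivity of $N_{i-1}$, the lower bound comes from feeding that upper bound back into the recurrence, and the first display follows from $c_{i+1}c_{i+2}=u_{i+1}c_{i+2}+1>2$ (the paper writes this product form directly; your $c_{i+1}>2/c_{i+2}$ is the same estimate). For \eqref{eq:LB3}, however, your route is genuinely different and cleaner than the paper's. The paper splits into three cases: $c_{i+1}\left( c_{i}-1 \right)>1$, where the lower bound of \eqref{eq:bnd2} suffices; $i=0$, done by direct computation with $N_{-1}=1$; and $i\geq 1$ with $c_{i+1}\left( c_{i}-1 \right)<1$, where it applies the recurrence \emph{twice} and invokes $N_{i-2}\geq 1$ and $u_{i}\geq 1$. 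You avoid the case split entirely by noticing that the strictness of the lower bound in \eqref{eq:bnd2} lets you get away with the non-strict identity $c_{i}c_{i+1}=u_{i}c_{i+1}+1\geq c_{i+1}+1$ (valid whenever $u_{i}\geq 1$): even in the degenerate case $u_{i}=1$, where $c_{i}c_{i+1}=c_{i+1}+1$ exactly and the paper's first case gives only equality in the comparison $1-1/\left( c_{i}c_{i+1} \right) \geq c_{i+1}/\left( c_{i+1}+1 \right)$, the overall chain stays strict. This eliminates the double application of the recurrence altogether.

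There is, though, one genuine gap, precisely at the point you flagged but did not resolve. Periodicity guarantees $u_{j}\geq 1$ only for $j\geq 1$; it says nothing about $u_{0}=\lfloor \xi \rfloor$, which can be $0$. This actually happens in scope: for $D=5$ one has $\xi=\left( \sqrt{5}-1 \right)/2<1$, so $u_{0}=0$, $c_{0}c_{1}=1<c_{1}+1$, and your inequality chain for \eqref{eq:LB3} at $i=0$ breaks down (the conclusion is still true there, since $N_{0}=1>\sqrt{5}/\left( c_{1}+1 \right)$ with $c_{1}=\left( 1+\sqrt{5} \right)/2$, but your argument does not deliver it). The fix is the paper's own $i=0$ computation, which works uniformly without any hypothesis on $u_{0}$: from $N_{-1}=1$ one gets $N_{0}/\sqrt{\Delta}=1/c_{1}-1/\left( c_{1}^{2}\sqrt{\Delta} \right)>1/\left( c_{1}+1 \right)$, using only $c_{1}>1$ and $\sqrt{\Delta}>2$, the latter holding since $\sqrt{\Delta}=2\sqrt{D}$ or $\sqrt{D}\geq\sqrt{5}$. (Your separate check of \eqref{eq:bnd2} at $i=0$ is indeed routine: it amounts to $N_{-1}=1<\sqrt{\Delta}/c_{0}$, i.e., $\xi<\xi+\delta$.) With that base case patched, your streamlined proof of \eqref{eq:LB3} for all $i\geq 1$ stands, and is shorter than the published one.
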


\begin{proof}
From Lemma~\ref{lem:Ni-rel}, we see that
\[\frac{N_{i}}{c_{i+2}}<\frac{\sqrt{\Delta}}{c_{i+1}c_{i+2}}
=\frac{\sqrt{\Delta}}{\left( u_{i+1}+1/c_{i+2} \right) c_{i+2}}
=\frac{\sqrt{\Delta}}{u_{i+1}c_{i+2}+1}<\frac{\sqrt{\Delta}}{2}.
\]

This proves the first inequality in this lemma.

The upper bound in \eqref{eq:bnd2} follows directly from Lemma~\ref{lem:Ni-rel},
as we have
\[
N_{i}=\frac{\sqrt{\Delta}}{c_{i+1}}-\frac{N_{i-1}}{c_{i+1}^{2}}
<\frac{\sqrt{\Delta}}{c_{i+1}}.
\]

To prove the lower bound in \eqref{eq:bnd2}, we apply the upper bound we just
obtained to Lemma~\ref{lem:Ni-rel}:
\[
N_{i}=\frac{\sqrt{\Delta}}{c_{i+1}}-\frac{N_{i-1}}{c_{i+1}^{2}}
>\frac{\sqrt{\Delta}}{c_{i+1}}-\frac{\sqrt{\Delta}}{c_{i}}\frac{1}{c_{i+1}^{2}}.
\]

For the proof of \eqref{eq:LB3}, we proceed as follows. We start by using the lower bound in \eqref{eq:bnd2}. If $c_{i}>2$, then $1-1/\left( c_{i}c_{i+1} \right)>c_{i+1}/\left( c_{i+1}+1 \right)$ (in fact, even $c_{i+1} \left( c_{i}-1 \right)>1$ suffices). Hence \eqref{eq:LB3} follows.

Suppose $i=0$, then we have
\[
\frac{N_{0}}{\sqrt{\Delta}}=\frac{1}{c_{1}}-\frac{1}{c_{1}^{2} \sqrt{\Delta}},
\]
since $N_{-1}=1$. Now we want to show that
\[
\frac{1}{c_{1}}-\frac{1}{c_{1}^{2}\sqrt{\Delta}}
= \frac{1}{c_{1}} \left( 1-\frac{1}{c_{1}\sqrt{\Delta}} \right)
>\frac{1}{c_{1}+1}=\frac{1}{c_{1}} \left( 1-\frac{1}{c_{1}} \right).
\]

This is the same as showing that
\[
1-\frac{1}{c_{1}\sqrt{\Delta}} > 1-\frac{1}{c_{1}}.
\]

This holds when $c_{1}>1/ \left( \sqrt{\Delta}\, -1 \right)$ and since $c_{1}>1$, we require $\sqrt{\Delta}>2$. But this is always true since $\sqrt{\Delta}=2\sqrt{D} \geq 2\sqrt{2}>2.8$ for $D \equiv 2,3 \bmod 4$ and $\sqrt{\Delta}=\sqrt{D} \geq \sqrt{5}>2.2$ for $D \equiv 1 \bmod 4$.

Suppose $i \geq 1$. Applying  Lemma~\ref{lem:Ni-rel} twice, we have
\[
\frac{N_{i}}{\sqrt{\Delta}}
=\frac{1}{c_{i+1}}-\frac{1}{c_{i}c_{i+1}^{2}}+\frac{N_{i-2}}{c_{i}^{2}c_{i+1}^{2} \sqrt{\Delta}}
=\frac{u_{i}}{c_{i}c_{i+1}}+\frac{N_{i-2}}{c_{i}^{2}c_{i+1}^{2}\sqrt{\Delta}}
\geq \frac{1}{c_{i}c_{i+1}}+\frac{1}{c_{i}^{2}c_{i+1}^{2} \sqrt{\Delta}},
\]
since $N_{i-2}, u_{i} \geq 1$ and using \eqref{eq:ci}. So it remains to show that
\[
\frac{1}{c_{i}c_{i+1}}>\frac{1}{c_{i+1}+1}.
\]

This is the same as showing that $c_{i+1}+1>c_{i}c_{i+1}$, i.e., $c_{i+1} \left( c_{i}-1 \right)<1$, which we know is true by our assumption, completing the proof.
\end{proof}

\section{Approximations of the norms of indecomposable integers}
\label{sec:mi}


In the following proposition, we will determine when $N\left( \delta_{i,r} \right)$
takes its largest value for a fixed index $i$.

\begin{prop}
\label{prop:r}
Let $i$ be odd and $r_{0}$ be such that $N \left( \delta_{i,r_{0}}\right)$ has
the maximal value among $N\left( \delta_{i,r} \right)$ where $0\leq r\leq u_{i+2}$.

If $u_{i+2}$ is even, then $r_{0}=u_{i+2}/2$.

If $u_{i+2}$ is odd, then $r_{0}$ is one of $\left( u_{i+2} \pm 1 \right)/2$.
\end{prop}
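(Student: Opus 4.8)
The plan is to treat $N(\delta_{i,r})$ as a real quadratic polynomial in $r$ and to show that its vertex lies very near $r = u_{i+2}/2$. First I would note that the recurrences for $p_{i,r}, q_{i,r}$ give
\[
\delta_{i,r} = p_{i,r} + q_{i,r}\delta = (rp_{i+1}+p_i) + (rq_{i+1}+q_i)\delta = \delta_i + r\,\delta_{i+1},
\]
so that, regarding $r$ as a real variable,
\[
f(r) := N(\delta_{i,r}) = N(\delta_{i+1})\,r^2 + \Tr\!\left(\delta_i\delta_{i+1}'\right) r + N(\delta_i).
\]
Since $i$ is odd, $i+1$ is even, whence $N(\delta_{i+1}) = -N_{i+1} < 0$; thus $f$ is a downward parabola and its maximum over $\mathbb{R}$ occurs at the vertex $r^{*} = -\Tr(\delta_i\delta_{i+1}')/\bigl(2N(\delta_{i+1})\bigr)$. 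Consequently the maximum of $f$ over the integers $0 \le r \le u_{i+2}$ is attained at whichever admissible integer is nearest to $r^{*}$.

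Next I would compute $r^{*}$ explicitly. Expanding the product shows $\Tr(\delta_i\delta_{i+1}') = T_{i+1}^{(a)} + T_{i+1}^{(b)}$, and substituting the two identities of Lemma~\ref{lem:tiab} at index $i+1$ together with $\delta + \xi = \sqrt{\Delta}$ gives
\[
\Tr\!\left(\delta_i\delta_{i+1}'\right) = (-1)^{i}\sqrt{\Delta} - 2c_{i+2}N(\delta_{i+1}).
\]
Dividing by $-2N(\delta_{i+1})$ and using that $i$ is odd (so $(-1)^{i} = -1$ and $N(\delta_{i+1}) = -N_{i+1}$) yields the clean formula
\[
r^{*} = c_{i+2} - \frac{\sqrt{\Delta}}{2N_{i+1}}.
\]

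The crux is then to prove $\frac{u_{i+2}-1}{2} < r^{*} < \frac{u_{i+2}+1}{2}$, i.e.\ that $r^{*}$ lies within $\tfrac12$ of $u_{i+2}/2$. The upper bound is immediate: the right-hand inequality of \eqref{eq:bnd2} at index $i+1$ gives $N_{i+1} < \sqrt{\Delta}/c_{i+2}$, hence $\sqrt{\Delta}/(2N_{i+1}) > c_{i+2}/2$ and $r^{*} < c_{i+2}/2 < (u_{i+2}+1)/2$ because $c_{i+2} < u_{i+2}+1$. For the lower bound I would use the left-hand inequality of \eqref{eq:bnd2}, which gives $\sqrt{\Delta}/(2N_{i+1}) < \frac{c_{i+2}}{2}\cdot\frac{c_{i+1}c_{i+2}}{c_{i+1}c_{i+2}-1}$; the key simplification is the identity $c_{i+1}c_{i+2} - 1 = u_{i+1}c_{i+2}$ (immediate from $c_{i+1} = u_{i+1} + 1/c_{i+2}$), which collapses the estimate to
\[
r^{*} > \frac{c_{i+2}}{2} - \frac{1}{2u_{i+1}} > \frac{u_{i+2}-1}{2},
\]
using $c_{i+2} > u_{i+2}$ and $u_{i+1} \ge 1$. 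This lower bound is the one delicate point of the argument; the rest is bookkeeping.

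Finally, since $r^{*}$ lies strictly inside the length-one open interval $\bigl(\frac{u_{i+2}-1}{2}, \frac{u_{i+2}+1}{2}\bigr) \subseteq [0, u_{i+2}]$, the nearest-integer characterization of the maximizer finishes the proof. When $u_{i+2}$ is even this interval is centered at the integer $u_{i+2}/2$, which is therefore its unique nearest integer, so $r_0 = u_{i+2}/2$; when $u_{i+2}$ is odd the two integers $(u_{i+2}\pm1)/2$ are precisely the endpoints of the interval, so the nearest integer to $r^{*}$ is one of them, giving $r_0 \in \{(u_{i+2}-1)/2, (u_{i+2}+1)/2\}$.
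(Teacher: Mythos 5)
Your proof is correct and takes essentially the same route as the paper's: both reduce the problem to showing that the vertex $\frac{T_{i+1}^{(a)}+T_{i+1}^{(b)}}{2N_{i+1}} = c_{i+2}-\frac{\sqrt{\Delta}}{2N_{i+1}}$ of the downward parabola $N\left(\delta_{i,r}\right)$ lies within $\frac{1}{2}$ of $u_{i+2}/2$, proving the upper half from the bound $N_{i+1}<\sqrt{\Delta}/c_{i+2}$ and the lower half from the lower bound in \eqref{eq:bnd2} together with the identity $c_{i+1}c_{i+2}=u_{i+1}c_{i+2}+1$. Your explicit vertex formula and nearest-integer conclusion merely streamline the paper's two absolute-value cases and its final triangle-inequality step.
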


\begin{proof}
We first express $\delta_{i+1}'\delta_{i}$ in terms of $T_{i}^{(a)}$ and $T_{i}^{(b)}$. From the definitions and \eqref{eq:pi1q1-rel}, we have
\begin{align*}
\delta_{i+1}'\delta_{i} &= \left( p_{i+1}+q_{i+1}\delta' \right) \left( p_{i}+q_{i}\delta \right) \\
&= p_{i+1}p_{i}+p_{i+1}q_{i}\delta+p_{i}q_{i+1}\delta'+q_{i}q_{i+1}N(\delta) \\
&= \frac{T_{i+1}^{(a)}+T_{i+1}^{(b)}}{2} + \left( p_{i+1}q_{i}-p_{i}q_{i+1} \right)\frac{\delta}{2} - \left( p_{i+1}q_{i}-p_{i}q_{i+1} \right)\frac{\delta'}{2} \\
&= \frac{T_{i+1}^{(a)}+T_{i+1}^{(b)}}{2} + (-1)^{i}\frac{\delta-\delta'}{2}.
\end{align*}

We use this relationship to obtain an expression for the norm, $N \left( \delta_{i,r} \right)$. Thus
\begin{align*}
N\left( \delta_{i,r} \right)
&= N\left( \delta_{i}+r\delta_{i+1} \right) = N \left( \frac{1}{\delta_{i+1}'}\delta_{i+1}' \left(\delta_{i}+r\delta_{i+1} \right)\right)=\frac{N \left(\delta_{i+1}'\delta_{i}+rN \left( \delta_{i+1} \right) \right)}{N \left( \delta_{i+1} \right)} \\
&= \frac{N\left(\frac{T_{i+1}^{(a)}+T_{i+1}^{(b)}}{2} + (-1)^{i}\frac{\delta-\delta'}{2}+rN \left( \delta_{i+1} \right) \right)}{N \left( \delta_{i+1} \right)} = \frac{\left( \frac{\delta-\delta'}{2} \right)^{2}-\left(\frac{T_{i+1}^{(a)}+T_{i+1}^{(b)}}{2}+rN \left( \delta_{i+1} \right)\right)^{2}}{\left|N \left( \delta_{i+1} \right) \right|},
\end{align*}
the last equality holding because $\frac{T_{i+1}^{(a)}+T_{i+1}^{(b)}}{2}+rN \left( \delta_{i+1} \right) \in \Q$, while $\frac{\delta-\delta'}{2}$ is a rational multiple of $\sqrt{D}$. Also notice that we use the fact that $N \left( \delta_{i+1} \right)<0$.

Hence
\[
N\left( \delta_{i,r} \right) = \frac{\left( \delta-\delta' \right)^{2} - \left( T_{i+1}^{(a)}+T_{i+1}^{(b)}+2rN \left( \delta_{i+1} \right) \right)^{2}}{4N_{i+1}}.
\]

It is clear from this expression that, for the fixed index $i$, $N\left( \delta_{i,r} \right)$ is maximal if the value of
$\left| T_{i+1}^{(a)}+T_{i+1}^{(b)}-2rN_{i+1} \right|$ is minimal. This is equivalent to the minimization of the value of 
\[
\left|\frac{T_{i+1}^{(a)}+T_{i+1}^{(b)}}{2N_{i+1}}-r\right|.
\]

Our next goal is to prove the estimate
\[
\left|\frac{T_{i+1}^{(a)}+T_{i+1}^{(b)}}{2N_{i+1}}-\frac{u_{i+2}}{2}\right|<\frac{1}{2}.
\]

From our expressions for $T_{i+1}^{(a)}$ and $T_{i+1}^{(b)}$ in Lemma~\ref{lem:tiab}, we obtain
\[
T_{i+1}^{(a)}+T_{i+1}^{(b)}
= (-1)^{i+2}(\delta+\xi)-2c_{i+2}N\left( \delta_{i+1} \right) = -\sqrt{\Delta}+2c_{i+2}N_{i+1},
\]
since $i$ is odd. Using this relationship and multiplying our desired inequality
by $2N_{i+1}$, we find that we want to show
\begin{align*}
N_{i+1} &> \left| T_{i+1}^{(a)}+T_{i+1}^{(b)}-N_{i+1}u_{i+2} \right|
= \left| -\sqrt{\Delta}+2c_{i+2}N_{i+1}-N_{i+1}u_{i+2} \right| \\
&= \left| -\sqrt{\Delta}+N_{i+1}(2c_{i+2}-u_{i+2}) \right|
= \left| -\sqrt{\Delta}+N_{i+1}\left(c_{i+2}+\frac{1}{c_{i+3}}\right) \right|.
\end{align*}

We show this inequality holds by applying the same procedure as in \cite{Ka} to
prove Proposition~10 there. We prove two inequalities.

a) $N_{i+1} > -\sqrt{\Delta} + N_{i+1}\left( c_{i+2}+\frac{1}{c_{i+3}} \right)$:

By the upper bound in the second inequality of Lemma~\ref{lem:upper}, \eqref{eq:ci}
and since $u_{i+2}>c_{i+2}-1$, we have 
\[
\sqrt{\Delta} > N_{i+1}c_{i+2} > N_{i+1}\left( c_{i+2}+\frac{1}{c_{i+3}}-1 \right),
\]
as we wanted to prove.

b) $N_{i+1} > \sqrt{\Delta}-N_{i+1}\left(c_{i+2}+\frac{1}{c_{i+3}}\right)$:

First note that from \eqref{eq:ci},
\[
c_{i+1}c_{i+2} = \left( u_{i+1}+\frac{1}{c_{i+2}}\right)c_{i+2}
= u_{i+1}c_{i+2}+1 \geq c_{i+2}+1,
\]
and so, upon dividing the left-most and right-most expressions by $c_{i+1}c_{i+2}^{2}$, we have
\[
-\frac{1}{c_{i+1}c_{i+2}} + \frac{1}{c_{i+2}} - \frac{1}{c_{i+1}c_{i+2}^{2}} \geq 0.
\]

Hence
\begin{align*}
\sqrt{\Delta} & \leq
\sqrt{\Delta} - \frac{\sqrt{\Delta}}{c_{i+1}c_{i+2}} + \frac{\sqrt{\Delta}}{c_{i+2}} - \frac{\sqrt{\Delta}}{c_{i+1}c_{i+2}^{2}}
=
\frac{\sqrt{\Delta}}{c_{i+2}}\left( 1-\frac{1}{c_{i+1}c_{i+2}} \right) \left( c_{i+2}+1 \right) \\
&< N_{i+1}\left( c_{i+2}+1 \right) < N_{i+1}\left( c_{i+2}+1+\frac{1}{c_{i+3}} \right),
\end{align*}
as we wanted to show (note that we used the lower bound for $N_{i+1}$ in
Lemma~\ref{lem:upper} to prove the penultimate inequality).

Let $r_{0}$ be an integer such that the value of
\[
\left| \frac{T_{i+1}^{(a)}+T_{i+1}^{(b)}}{2N_{i+1}}-r_{0} \right|
\]
is minimal. This quantity is at most $1/2$. Hence
\[
\left| r_{0}-\frac{u_{i+2}}{2} \right|
< \left| r_{0}-\frac{T_{i+1}^{(a)}+T_{i+1}^{(b)}}{2N_{i+1}} \right| + \left| \frac{T_{i+1}^{(a)}+T_{i+1}^{(b)}}{2N_{i+1}}-\frac{u_{i+1}}{2} \right|
< \frac{1}{2}+\frac{1}{2}=1,
\] 
which is precisely the assertion of the proposition.

If $u_{i+2}/2 \in \Z$, then $r_{0}=u_{i+2}/2$ follows immediately.
\end{proof}

In the following Proposition,
we will provide an approximation to $M_{i}$.

\begin{prop}
\label{prop:powerSeriesM}
Suppose that $i$ is odd.
Then
\begin{equation}
\label{eq:mi-conj1}
c_{i+2}-2 < \frac{4M_{i}}{\sqrt{\Delta}}<c_{i+2}+1.
\end{equation}
\end{prop}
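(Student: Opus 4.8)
The plan is to start from the closed-form expression for $N(\delta_{i,r})$ obtained in the proof of Proposition~\ref{prop:r} and convert the maximization over $r$ into the optimization of a single downward parabola. First I would record the two simplifications available because $i$ is odd: since $i+1$ is even we have $N(\delta_{i+1})<0$, whence $N(\delta_{i+1})=-N_{i+1}$, and (as computed inside the proof of Proposition~\ref{prop:r} from Lemma~\ref{lem:tiab}) $T_{i+1}^{(a)}+T_{i+1}^{(b)}=-\sqrt{\Delta}+2c_{i+2}N_{i+1}$. I would also note the elementary identity $(\delta-\delta')^{2}=\Delta$, which holds for both choices of $R$. Substituting these into
\[
N(\delta_{i,r}) = \frac{(\delta-\delta')^{2} - \left( T_{i+1}^{(a)}+T_{i+1}^{(b)}+2rN(\delta_{i+1}) \right)^{2}}{4N_{i+1}}
\]
and setting $x=c_{i+2}-r$, the constant and cross terms cancel and the expression collapses to the clean quadratic
\[
N(\delta_{i,r}) = \sqrt{\Delta}\,x - N_{i+1}x^{2}.
\]

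Next I would complete the square, writing $N(\delta_{i,r}) = \frac{\Delta}{4N_{i+1}} - N_{i+1}(x-x^{*})^{2}$ with $x^{*}=\frac{\sqrt{\Delta}}{2N_{i+1}}$, so the continuous maximum $\frac{\Delta}{4N_{i+1}}$ is attained at $x^{*}$. The integer maximizer $r_{0}$ supplied by Proposition~\ref{prop:r} is then precisely the one for which $x_{0}=c_{i+2}-r_{0}$ is the admissible value closest to $x^{*}$. Since consecutive admissible values of $x$ are spaced one unit apart and $x^{*}$ lies strictly inside the admissible range — a consequence of the bounds $\frac{c_{i+2}}{2}<x^{*}<\frac{c_{i+2}+1}{2}$, which follow from the upper bound in \eqref{eq:bnd2} and from \eqref{eq:LB3} applied with index $i+1$ — I obtain $|x_{0}-x^{*}|\le 1/2$. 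Hence
\[
M_{i} = \frac{\Delta}{4N_{i+1}} - N_{i+1}(x_{0}-x^{*})^{2}, \qquad 0 \le N_{i+1}(x_{0}-x^{*})^{2} \le \frac{N_{i+1}}{4}.
\]

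Finally I would divide by $\sqrt{\Delta}/4$ to get
\[
\frac{4M_{i}}{\sqrt{\Delta}} = \frac{\sqrt{\Delta}}{N_{i+1}} - \frac{4N_{i+1}(x_{0}-x^{*})^{2}}{\sqrt{\Delta}},
\]
and bound each piece using Lemma~\ref{lem:upper} at index $i+1$. The upper bound in \eqref{eq:bnd2} gives $N_{i+1}<\sqrt{\Delta}/c_{i+2}$, hence $\sqrt{\Delta}/N_{i+1}>c_{i+2}$, while \eqref{eq:LB3} gives $N_{i+1}>\sqrt{\Delta}/(c_{i+2}+1)$, hence $\sqrt{\Delta}/N_{i+1}<c_{i+2}+1$. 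The upper bound $4M_{i}/\sqrt{\Delta}<c_{i+2}+1$ follows immediately by discarding the nonnegative error term. For the lower bound, the error term is controlled by $\frac{4N_{i+1}(x_{0}-x^{*})^{2}}{\sqrt{\Delta}} \le \frac{N_{i+1}}{\sqrt{\Delta}} < \frac{1}{c_{i+2}} < 1$, yielding $4M_{i}/\sqrt{\Delta} > c_{i+2}-1$, which is in fact stronger than the claimed $c_{i+2}-2$.

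The main obstacle — really the only delicate point — is justifying $|x_{0}-x^{*}|\le 1/2$ together with the fact that the optimal $r_{0}$ actually lies in the admissible window $0\le r_{0}\le u_{i+2}$; both are exactly what Proposition~\ref{prop:r} is built to provide, so the argument reduces to feeding its conclusion into the completed-square form. Everything else consists of the routine index-shifted applications of the estimates in Lemma~\ref{lem:upper}, which are valid for all indices, together with the observation $c_{i+2}>1$ used in the last step.
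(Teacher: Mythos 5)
Your proposal is correct, and it takes a genuinely different route from the paper's proof. The paper works at the explicit candidate maximizers supplied by Proposition~\ref{prop:r}: starting from $N(\delta_{i,r})=N_{i}-r^{2}N_{i+1}+r\bigl(T_{i+1}^{(a)}+T_{i+1}^{(b)}\bigr)$, it eliminates $T_{i+1}^{(a)}+T_{i+1}^{(b)}$ via the value at $r=u_{i+2}$, obtains three separate expressions for $4M_{i}$ in terms of $N_{i},N_{i+1},N_{i+2}$ (its equations \eqref{eq:mi-a}, \eqref{eq:mi-bMinus}, \eqref{eq:mi-bPlus}, one for each of $r=u_{i+2}/2$ and $r=(u_{i+2}\pm1)/2$), and then runs a case-specific algebraic estimate for each, using Lemma~\ref{lem:Ni-rel} together with \eqref{eq:bnd2} and \eqref{eq:LB3}. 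You instead substitute $T_{i+1}^{(a)}+T_{i+1}^{(b)}=-\sqrt{\Delta}+2c_{i+2}N_{i+1}$ and $(\delta-\delta')^{2}=\Delta$ directly into the closed form from the proof of Proposition~\ref{prop:r}; I checked the algebra and the change of variable $x=c_{i+2}-r$ does collapse everything to $N(\delta_{i,r})=\sqrt{\Delta}\,x-N_{i+1}x^{2}$, after which the completed square, the vertex localization $c_{i+2}/2<x^{*}<(c_{i+2}+1)/2$ (from \eqref{eq:bnd2} and \eqref{eq:LB3} at index $i+1$), and the final estimates are all sound. What your approach buys is uniformity and strength: there is no parity casework and no need to know which of $(u_{i+2}\pm1)/2$ is optimal, and you get the sharper lower bound $4M_{i}/\sqrt{\Delta}>c_{i+2}-1/c_{i+2}>c_{i+2}-1$, whereas the paper must settle for $c_{i+2}-2$ precisely because its third case is lossy (it remarks that $c_{i+2}-1$ escapes its method for $1<c_{i+2}<2$ and $2.618\ldots<c_{i+2}<3$; your argument shows the stronger bound is nonetheless true). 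Two small points of presentation rather than substance: attributing $|x_{0}-x^{*}|\le 1/2$ to Proposition~\ref{prop:r} is slightly off, since that proposition only locates $r_{0}$ near $u_{i+2}/2$ --- what you actually use, and correctly supply yourself, is the grid argument that $x^{*}$ lies strictly inside $\left[ 1/c_{i+3}, c_{i+2} \right]$ (the upper end because $(c_{i+2}+1)/2\le c_{i+2}$ for $c_{i+2}\ge 1$, the lower end because $x^{*}>c_{i+2}/2$ exceeds $\min\{1,c_{i+2}-1\}>1/c_{i+3}$ or $c_{i+2}-1$ as appropriate), so the nearest admissible $x$ is within half the unit spacing; and the inequality $|x_{0}-x^{*}|\le 1/2$ must indeed be kept weak, since equality occurs (e.g., $D=3$, $i=1$, where the two admissible points tie), with strictness of \eqref{eq:mi-conj1} surviving because the bounds \eqref{eq:bnd2} and \eqref{eq:LB3} on $\sqrt{\Delta}/N_{i+1}$ are strict.
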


\begin{proof}
First of all, we will derive an expression for $N\left( \delta_{i,r} \right)$.
We have
\begin{align*}
N\left( \delta_{i,r} \right)
& = \left( \delta_{i}+r\delta_{i+1} \right) \left( \delta'_{i}+r\delta'_{i+1} \right)
  = N \left( \delta_{i} \right) + r^{2}N \left( \delta_{i+1} \right)+r\left( \delta_{i}\delta_{i+1}'+\delta_{i+1}\delta' \right) \\
& = N \left( \delta_{i} \right) + r^{2}N \left( \delta_{i+1} \right)+r\left( 2p_{i}p_{i+1}+2q_{i}q_{i+1}N(\delta) + \left( p_{i+1}q_{i}+p_{i}q_{i+1} \right)\Tr(\delta) \right) \\
& = N \left( \delta_{i} \right) + r^{2}N \left( \delta_{i+1} \right)+r\left( T_{i+1}^{(a)}+T_{i+1}^{(b)} \right),
\end{align*}
the last equality following from the definitions of $T_{i+1}^{(a)}$ and $T_{i+1}^{(b)}$.

Since $i$ is odd, we have
\begin{equation}
\label{eq:Ndir}
N\left( \delta_{i,r} \right) = N_{i}-r^{2}N_{i+1}+r\left( T_{i+1}^{(a)}+T_{i+1}^{(b)} \right).
\end{equation}

For $r=u_{i+2}$, we get
\[
N_{i+2}=N_{i}-u_{i+2}^{2}N_{i+1}+u_{i+2}\left(T_{i+1}^{(a)}+T_{i+1}^{(b)}\right).
\]

Hence
\begin{equation}
\label{eq:uiti1-rel}
T_{i+1}^{(a)}+T_{i+1}^{(b)} = \frac{N_{i+2}+u_{i+2}^{2}N_{i+1}-N_{i}}{u_{i+2}}.
\end{equation}


From \eqref{eq:Ndir} with $r=u_{i+2}/2$, we obtain 
\[
M_{i}=N_{i}-\frac{u_{i+2}^{2}}{4}N_{i+1} + \frac{u_{i+2}}{2}\left( T_{i+1}^{(a)}+T_{i+1}^{(b)} \right).
\] 

Substituting \eqref{eq:uiti1-rel} into this equation, we conclude that
\begin{equation}
\label{eq:mi-a}
4M_{i}=4N_{i}-u_{i+2}^{2}N_{i+1} + 2\left( N_{i+2}+u_{i+2}^{2}N_{i+1}-N_{i} \right)
=u_{i+2}^{2}N_{i+1}+2\left( N_{i+2}+N_{i} \right).
\end{equation}

%

From \eqref{eq:Ndir} with $r=\left( u_{i+2}-1 \right)/2$
and \eqref{eq:uiti1-rel}, we obtain 
\begin{equation}
\label{eq:mi-bMinus}
4M_{i}=2\frac{u_{i+2}+1}{u_{i+2}}N_{i}+\left( u_{i+2}^{2}-1 \right)N_{i+1}
+2\frac{u_{i+2}-1}{u_{i+2}}N_{i+2}.
\end{equation}

%
%
%

For $r=\left( u_{i+2}+1 \right)/2$, we have
\begin{equation}
\label{eq:mi-bPlus}
4M_{i}=2\frac{u_{i+2}-1}{u_{i+2}}N_{i}+\left( u_{i+2}^{2}-1 \right)N_{i+1}
+2\frac{u_{i+2}+1}{u_{i+2}}N_{i+2}.
\end{equation}

We now use \eqref{eq:mi-a}, \eqref{eq:mi-bMinus}, \eqref{eq:mi-bPlus} above along with \eqref{eq:bnd2} and \eqref{eq:LB3} of Lemma~\ref{lem:upper}.

From \eqref{eq:mi-a} and Lemma~\ref{lem:Ni-rel}, along with \eqref{eq:LB3}, we have
\begin{align*}
4M_{i}
&=u_{i+2}^{2}N_{i+1}+2\left( N_{i+2}+N_{i} \right) \\
&= \left( u_{i+2}^{2}-2c_{i+2}^{2}-\frac{2}{c_{i+3}^{2}} \right) N_{i+1} +2\sqrt{\Delta}c_{i+2} + 2 \frac{\sqrt{\Delta}}{c_{i+3}} \\
&< \left( u_{i+2}^{2}-2c_{i+2}^{2}-\frac{2}{c_{i+3}^{2}} \right) \frac{\sqrt{\Delta}}{c_{i+2}+1} +2\sqrt{\Delta}c_{i+2} + 2 \frac{\sqrt{\Delta}}{c_{i+3}},
\end{align*}
if $u_{i+2}$ is even.

Substituting $u_{i+2}=c_{i+2}-1/c_{i+3}$, we have
\[
\frac{4M_{i}}{\sqrt{\Delta}}
< \frac{c_{i+2}^{2} + 2c_{i+2} + \left( -1/c_{i+3}^{2}+2/c_{i+3} \right)}{c_{i+2}+1}
= c_{i+2}+1 + \frac{-\left( 1/c_{i+3}-1 \right)^{2}}{c_{i+2}+1} < c_{i+2}+1.
\]

Using the upper bound for $N_{i}$ in \eqref{eq:bnd2} instead of the lower bound
in \eqref{eq:LB3}, we obtain
\[
\frac{u_{i+2}\left( 2c_{i+2}-u_{i+2} \right)}{c_{i+2}}<\frac{4M_{i}}{\sqrt{\Delta}}.
\]

Since $u_{i+2}\left( 2c_{i+2}-u_{i+2} \right)=\left( c_{i+2}-1 \right) c_{i+2}
+ \left( c_{i+2} - \left( c_{i+2}-u_{i+2} \right)^{2} \right) > \left( c_{i+2}-1 \right) c_{i+2}$,
our lower bound follows.

For $r=\left( u_{i+2}+1 \right)/2$, from \eqref{eq:mi-bPlus} and Lemma~\ref{lem:Ni-rel},
along with \eqref{eq:LB3}, we have
\begin{align*}
4M_{i}
&= \left( u_{i+2}^{2}-1 \right)N_{i+1}+2\frac{u_{i+2}-1}{u_{i+2}}N_{i}+2\frac{u_{i+2}+1}{u_{i+2}}N_{i+2} \\
&= \left( u_{i+2}^{2}-1-2\frac{\left( u_{i+2}+1 \right) \left( c_{i+2}-u_{i+2} \right)^{2}}{u_{i+2}}-2\frac{\left( u_{i+2}-1 \right) c_{i+2}^{2}}{u_{i+2}} \right) N_{i+1} \\
& +2\sqrt{\Delta}\frac{\left( u_{i+2}+1 \right) \left( c_{i+2}-u_{i+2} \right)}{u_{i+2}}+2\sqrt{\Delta}\frac{\left( u_{i+2}-1 \right) c_{i+2}}{u_{i+2}} \\
&< -\left( u_{i+2}-2c_{i+2}+1 \right)^{2} \frac{\sqrt{\Delta}}{c_{i+2}+1} \\
& +2\sqrt{\Delta}\frac{\left( u_{i+2}+1 \right) \left( c_{i+2}-u_{i+2} \right)}{u_{i+2}}+2\sqrt{\Delta}\frac{\left( u_{i+2}-1 \right) c_{i+2}}{u_{i+2}} \\
&= \frac{u_{i+2}^{2}-\left( 2c_{i+2}-4 \right) u_{i+2}-6c_{i+2}+3}{c_{i+2}+1}
=c_{i+2}+1-\frac{\left( u_{i+2}-c_{i+2}+2 \right)^{2}}{c_{i+2}+1}<c_{i+2}+1.
\end{align*}

As for $r=u_{i+2}/2$, we can use the upper bound for $N_{i}$ in \eqref{eq:bnd2}
instead of the lower bound in \eqref{eq:LB3} to obtain
\[
\frac{\left( u_{i+2}+1 \right) \left( 2c_{i+2}-u_{i+2}-1 \right)}{c_{i+2}}
<\frac{4M_{i}}{\sqrt{\Delta}}.
\]

Since
\[
\left( u_{i+2}+1 \right) \left( 2c_{i+2}-u_{i+2}-1 \right)
=\left( c_{i+2}-1 \right) c_{i+2}
+ \left( c_{i+2} - \left( c_{i+2}-u_{i+2}-1 \right)^{2} \right) > \left( c_{i+2}-1 \right) c_{i+2},
\]
our lower bound follows.

For $r=\left( u_{i+2}-1 \right)/2$, we proceed in the same way, using \eqref{eq:mi-bMinus},
Lemma~\ref{lem:Ni-rel} and \eqref{eq:LB3} to obtain
\[
4M_{i}
<c_{i+2}+1-\frac{\left( u_{i+2}-c_{i+2} \right)^{2}}{c_{i+2}+1}<c_{i+2}+1.
\]

Once again, we use the upper bound for $N_{i}$ in \eqref{eq:bnd2}
instead of the lower bound in \eqref{eq:LB3} to obtain
\[
\frac{\left( u_{i+2}-1 \right) \left( 2c_{i+2}-u_{i+2}+1 \right)}{c_{i+2}}
<\frac{4M_{i}}{\sqrt{\Delta}}.
\]

Since
\[
\left( u_{i+2}-1 \right) \left( 2c_{i+2}-u_{i+2}+1 \right)
=\left( c_{i+2}-2 \right) c_{i+2}
+ \left( 2u_{i+2}-1-\left( c_{i+2}-u_{i+2} \right)^{2} \right) > \left( c_{i+2}-2 \right) c_{i+2},
\]
our lower bound follows here too.

Note that it is only in this case where we need $c_{i+2}-2$ as our lower bound.
In fact, $c_{i+2}-1$ suffices, except for $1<c_{i+2}<2$ and $2.618\ldots<c_{i+2}<3$.
\end{proof}



\section{Computational Work}
\label{sec:calcs}

Initially, the techniques developed by Kala \cite{Ka} for $D \equiv 2 \bmod 4$ were adapted for use with $D \equiv 1, 3 \bmod 4$ too. In this way, we found
counterexamples to the conjecture of Jang and Kim for $D \equiv 1,3 \bmod 4$,
as well as a smaller counterexample for $D \equiv 2 \bmod 4$ than provided in
\cite{Ka}. For example, in this way, it was shown that
$D=68\,756\,796\,852\,765 \equiv 1 \bmod 4$ yields a counterexample to the
conjecture of Jang and Kim.

During this stage of the work, we directly examined small values of $D$. In this way, we found much smaller counterexamples for $D \equiv 1,2,3 \bmod 4$. Moreover we established that such examples were minimal.
In this section, we describe how such computations were done, the scope of the computations, and provide some summary information about the results.

For each $D$, we search for distinct odd indices $i$ and $j$ such that 
\begin{itemize}
\item $\delta_{i+1}$ is the element with the largest negative norm (= the smallest norm in absolute value $N_{i+1}=N$),
\item $N_{i+1} = \left| N \left( \delta_{i+1} \right) \right| < N_{j+1} = \left| N \left( \delta_{j+1} \right) \right|$, but the difference of the norms is small,
\item $M_{i}=N \left( \delta_{i, r} \right) < M_{j} = N \left( \delta_{j, t} \right)$ for $r$ and $t$ as in Proposition~\ref{prop:r}.
\end{itemize}

Such $M_{j}$ are counterexamples to the Jang-Kim Conjecture.

We performed two separate calculations. The counterexamples found from both calculations played a crucial role in the results of this paper.

\subsection{$1<D<10^{6}$}

First, for $1<D<10^{6}$, we found all $D$ giving rise to counterexamples. To find the period for all such $D$ required using very high precision. We set \verb+\p 2500+ in PARI/GP (the longest period found was for $D=950959$, which had period length $2448$) and the PARI stack size to be 64mb. This calculation took 19 hours using a development build of PARI/GP 2.12.0 \cite{Pari} on a
Windows 10 laptop with an Intel i7-3630QM processor and 8gb of RAM.

We found 54 counterexamples in total, 1 with $D \equiv 1 \bmod 4$, 29 with $D \equiv 2 \bmod 4$ and $24$ with $D \equiv 3 \bmod 4$. The minimal values of $D$ in each of these congruence classes that gives rise to a counterexample are given in Section~\ref{sec:min} below.

It is notable that there are significantly fewer counterexamples with $D \equiv 1 \bmod 4$ (the next such counterexample occurs at $D=1,332,413$). This behaviour continues as we search over larger ranges of $D$, as we will see in the description below of the second calculation.
We do not understand the reason for this behaviour. It also arises with the families of counterexamples that we have found and how fast they diverge from the Jang-Kim Conjecture.

\subsection{$1<D<10^{10}$}

For the second calculation, with $1<D<10^{10}$, we found all $D$ giving rise to counterexamples with the minimal period length of the continued fraction expansion of $\xi$ is at most $100$ (although many counterexamples were found where the period length was larger too). Much less accuracy was required here, \verb+\p 150+ was ample. This lead to the calculation being much faster and hence our ability to cover a much larger range. As in the first calculation, we set the PARI stack size to be 64mb.
On the same hardware and with the same version of PARI/GP, this calculation took approximately 580 CPU hours. This was spread across three of the four cores of the Intel i7-3630QM processor used.

We summarise in Table~\ref{table:count} information about the number of distinct squarefree $D<1-^{10}$ in each congruence class modulo $4$ with the minimal period length of the continued fraction expansion of $\xi$ at most $100$ that give rise to counterexamples. We provide two columns for $D \equiv 1 \bmod 4$. The first one is for the Jang-Kim Conjecture as stated, while the second column (marked with an asterisk in the header column) is for the conjecture as we believe they intended it. The minimal values of $D$ in each of these congruence classes that gives rise to a counterexample are given in Section~\ref{sec:min} below.

\begin{table}[h!]
\centering
\begin{tabular}{|c|r|r|r|r|} 
 \hline
 $N$ & $D \equiv 1 \bmod 4$ & $D \equiv 1 \bmod 4$ (*) & $D \equiv 2 \bmod 4$ & $D \equiv 3 \bmod 4$ \\ \hline
 $10^{6}$  &    $1$ &   $28$  &   $29$ &    $24$ \\
 $10^{7}$  &    $8$ &  $138$  &  $154$ &   $176$ \\
 $10^{8}$  &   $49$ &  $636$  &  $682$ &   $793$ \\
 $10^{9}$  &  $214$ & $2601$  & $2679$ &  $2966$ \\
 $10^{10}$ &  $879$ & $9648$  & $9210$ & $10626$ \\ \hline
\end{tabular}
\label{table:count}
\caption{Counterexample Counts with $D \leq N$}
\end{table}

A quantity that is of particular interest in us in this work is
\[
r(D)=\frac{M-UB_{JK}}{\sqrt{D}},
\]
where we define $UB_{JK}$ to be what we believe to be the intended upper bound in the Jang-Kim Conjecture, namely,
\[
\def\arraystretch{2.2}
UB_{JK}=
\left\{
\begin{array}{ll}
	\displaystyle\frac{D-a^{2}}{4N} & \text{if $D \equiv 1 \bmod 4$}, \\
	\displaystyle\frac{D-a^{2}}{N} & \text{if $D \equiv 2,3 \bmod 4$},
\end{array}
\right.
\]
with $N$ as in Theorem~\ref{thm:jk} and $a$ as in Conjecture~\ref{conj:MP}. We record here the
largest values of $r(D)$ that we found in each congruence class.

For $D \equiv 1 \bmod 4$, the largest value of $r(D)$ we found was $0.122981$ for $D=259,209,905$. The minimal period length of the continued fraction expansion of $\xi$ is 18 (note that using the upper bound actually stated by Jang and Kim in their conjecture rather than $UB_{JK}$, the largest value of $r(D)$ we found was $0.021724$ for $D=30,386,757$. The minimal period length of the continued fraction expansion of $\xi$ is 18).

For $D \equiv 2 \bmod 4$, the largest value of $r(D)$ we found was $0.242079$ for $D=34,650,842$. The minimal period length of the continued fraction expansion of $\xi$ is 20.

Although both these values of $D$ are considerably smaller than $10^{10}$, this is not significant. The second largest value of $r(D)$ for $D \equiv 2 \bmod 4$ is $0.240347$, which occurs for $D=9,720,174,694$. Similarly for $D \equiv 1 \bmod 4$, the next seven largest values of $r(D)$ arise from $D>10^{9}$.

For $D \equiv 3 \bmod 4$, the largest value of $r(D)$ we found was $0.243264$ for $D=3,555,318,415$. The minimal period length of the continued fraction expansion of $\xi$ is 14.

\subsection{$j$}

In the examples in the following sections, $N_{j}$ is the second largest value of the negative norms among the $\delta_{i}$'s. However, this need not always be the case, and it seems likely that there can be arbitrarily many such negative norms between the largest one and the one associated with the indecomposable number of largest norm.

The most extreme example we found for $D<10^{9}$ was $D=457,859,058$. Here we have $j=9$, and $N \left( \delta_{j} \right)$ is the fifth largest value of $N \left( \delta_{i} \right)<0$.

For $D<10^{10}$, the most extreme example arises from $D=5,654,211,695$, where $j=49$, and $N \left( \delta_{j} \right)$ is the eighth largest value of $N \left( \delta_{i} \right)<0$.

The occurrence of such counterexamples so far from the $\delta_{i}$ of largest negative norm, along with other information about their counts, etc. acquired from our calculations in Section~\ref{sec:calcs}, supports our claim that they can occur arbitrarily far from such elements.

\section{Minimal Counterexamples}
\label{sec:min}

\subsection{$D \equiv 1 \bmod 4$}

The smallest $D \equiv 1 \bmod 4$ for which we found a counterexample was $D=715461$. Here the continued fraction expansion of $\xi$ is
\[
\left[ 422, \overline{2, 2, 1, 4, 2, 9, 2, 281, 2, 9, 2, 4, 1, 2, 2, 845} \right],
\]
which has minimal period length $16$.  We obtain $N$ from $i=13$, for which $\delta_{i+1}=106901916571+126384120\sqrt{D}$, $N=N_{i+1}=359$. $\delta_{i,r}=\delta_{i,1}=$\\ $289692067643/2+342486629/2\sqrt{D}$, $M_{i}=487$. For such $D$ and $N$, we have $a=127$, so the conjectured Jang-Kim upper bound is $487$.

However, for $j=3$, we have $N_{j+1}=365$, $\delta_{j,t}=\delta_{j,1}=16917+20\sqrt{D}$, $M_{j}=489$, which exceeds the Jang-Kim upper bound of $487$.

\subsection{$D \equiv 1 \bmod 4$ (*)}

As in the previous section, we use the asterisk here to indicate that this is based on how we believe the Jang-Kim Conjecture should have been formulated. Namely, with $a$ the smallest non-negative integer such that $a^{2} \equiv D \bmod 4N$ (rather than $\bmod N$, as they wrote).

The smallest $D \equiv 1 \bmod 4$ for which we found a counterexample was $D=12441$. Here the continued fraction expansion of $\xi$ is
\[
\left[ 55, \overline{3, 1, 2, 2, 3, 2, 2, 1, 3, 111} \right],
\]
which has minimal period length $10$.  We obtain $N$ from $i=3$, for which $\delta_{i+1}=1450+13\sqrt{D}$, $N=N_{i+1}=29$. $\delta_{i,r}= \delta_{i,1}=4127/2+37/2\sqrt{D}$, $M_{i}=100$. For such $D$ and $N$, we have $a=29$, so the conjectured Jang-Kim upper bound is $100$.

However, for $j=7$, we have $N_{j+1}=30$, $\delta_{j,t}=\delta_{j,1}=66812+599\sqrt{D}$, $M_{j}=103$, which exceeds the Jang-Kim upper bound of $100$.

\subsection{$D \equiv 2 \bmod 4$}

The smallest $D \equiv 2 \bmod 4$ for which we found a counterexample was $D=25982$. Here the continued fraction expansion of $\xi$ is
\[
\left[ 161, \overline{5, 3, 1, 1, 4, 1, 1, 1, 2, 1, 1, 4, 1, 7, 1, 1, 1, 28, 1, 1, 1, 7, 1, 4, 1, 1, 2, 1, 1, 1, 4, 1, 1, 3, 5, 322} \right],
\]
which has minimal period length $36$. We obtain $N$ from $i=33$, for which $\delta_{i+1}=241149629719159+1496064474059\sqrt{D}$, $N=N_{i+1}=61$. $\delta_{i,r}=\delta_{i,2}=550253365757040+3413708381801\sqrt{D}$, $M_{i}=418$. For such $D$ and $N$, we have $a=22$, so the conjectured Jang-Kim upper bound is $418$.

However, for $j=3$, we have $N_{j+1}=62$, $\delta_{j,t}=\delta_{j,2}=15313+95\sqrt{D}$, $M_{j}=419$, which exceeds the Jang-Kim upper bound of $418$.

\subsection{$D \equiv 3 \bmod 4$}

The smallest $D \equiv 3 \bmod 4$ for which we found a counterexample was $D=46559$. Here the continued fraction expansion of $\xi$ is
\[
\left[ 215, \overline{1, 3, 2, 4, 1, 1, 1, 2, 1, 1, 2, 215, 2, 1, 1, 2, 1, 1, 1, 4, 2, 3, 1, 430} \right],
\]
which has minimal period length $24$. We obtain $N$ from $i=9$, for which $\delta_{i+1}=187293+868\sqrt{D}$, $N=N_{i+1}=167$. $\delta_{i,r}=\delta_{i,1}=295828+1371\sqrt{D}$, $M_{i}=265$. For such $D$ and $N$, we have $a=48$, so the conjectured Jang-Kim upper bound is $265$.

However, for $j=1$, we have $N_{j+1}=175$, $\delta_{j,t}=\delta_{j,1}=1079+5\sqrt{D}$, $M_{j}=266$, which exceeds the Jang-Kim upper bound of $265$.

\section{Infinite Families}
\label{sec:family}

As well as finding the smallest $D$ for which the conjecture of Jang and Kim fails, we also found infinitely many examples for which this conjecture fails. Moreover, we also found families where the difference between the maximum norm of an indecomposable integer and the conjectured upper bound grows arbitrarily large.

\subsection{$D \equiv 3 \bmod 4$}

Let $m$ and $n$ be non-negative integers and set
\begin{eqnarray*}
D=D(m,n) 
       & = & \left( 2\left( 256n^{2}+672n+429 \right)^{2} m + 3584n^{3}+13376n^{2}+16414n+6641 \right) \\
       &   & \times \left( 2\left( 128n^{2}+328n+203 \right)^{2} m + 896n^{3}+3232n^{2}+3822n+1487 \right).
\end{eqnarray*}

Putting $u_{0}=2\left( 128n^{2}+328n+203 \right) \left( 256n^{2}+672n+429 \right) m+1792n^{3}+6576n^{2}+7922n+3142$,
note that the continued fraction expansion of $\sqrt{D(m,n)}$ is
\[
\left[ u_{0}, \overline{2, 8n+10, 1, 4n+3, 1, 1, 1, 1, 1, 4n+3, 1, 8n+10, 2, 2u_{0}} \right].
\]

We find that the minimum negative norm of the elements $\delta_{i+1}$ occurs for $i=5$ and its absolute value is
\[
N_{m,n}=
4 \left( 128n^{2}+328n+203 \right)^{2}m
+1792n^{3}+6464n^{2}+7644n+2974.
\]

Since the only common factor of the linear and constant coefficients of $N_{m,n}$ (viewed as a polynomial in $m$) is $2$, by Dirichlet's Theorem on primes in arithmetic progression, for each fixed value of $n$, there are infinitely
values of $m$ such that $N_{m,n}$ is two times a prime. For such values of $m$,
$N_{m,n}$ is twice the second factor in the expression for $D(m,n)$, while the first factor is odd.
Therefore, $D_{m,n} \equiv N_{m,n}/2 \bmod N_{m,n}$ and so $D_{m,n} \equiv \left( N_{m,n}/2 \right)^{2} \bmod N_{m,n}$, since $N_{m,n} \equiv 2 \bmod 4$. Thus we can take $a_{m,n}=N_{m,n}/2$.
Therefore for such $m$, the upper
bound of Jang and Kim is
\begin{eqnarray*}
& & 
UB_{JK}=\frac{D(m,n)-N_{m,n}^{2}/4}{N_{m,n}}
 =\frac{D(m,n)}{N_{m,n}}-\frac{N_{m,n}}{4} \\
&=&
\left( 49152n^{4}+260096n^{3}+511680n^{2}+443408n++142832 \right)m +1344n^{3}+5072n^{2}+6296n+2577.
\end{eqnarray*}

However for $j=11$, we find that the norm of $\delta_{j,1}$ is
\[
M_{m,n}=\left( 65536n^{4}+344064n^{3}+669952n^{2}+573216n+181892 \right)m +1792n^{3}+6688n^{2}+8172n+3282,
\]
which exceeds the upper bound of Jang and Kim. 
A comparison with the above expression for $u_{0}$ then shows that
$r(D) \rightarrow 1/4$, as $m, n \rightarrow \infty$. In fact, we have
\[
r(D)
< \frac{16384n^{4}+83968n^{3}+158272n^{2}+129808n+39060}{2\left( 128n^{2}+328n+203 \right) \left( 256n^{2}+672n+429 \right)}
< \frac{1}{4}-\frac{1}{64n},
\]
so $1/4$ is approached from below.

Lastly, we note that one can show that $M_{m,n}$ is the largest norm of any indecomposable element of $\Q \left( \sqrt{D(m,n)} \right)$ too.

We now explain how we discovered such a family.

Using the counterexamples we collected from the calculations described in Section~\ref{sec:calcs}, we observed that for all the counterexamples with $r(D)>0.2$ for $D \equiv 3 \bmod 4$, the minimal period length of the continued fraction expansion of $\xi$ was $14$. Moreover the largest value of $r(D)$ found for all $D \equiv 3 \bmod 4$ with $1<D<10^{10}$ and period length at most $100$ arose from a counterexample ($D=3,555,318,415$) with  was $r(D)=0.243264$ and period length $14$ too. Since short periods make our creation and checking of families easy, we looked at patterns among such counterexamples.

All the counterexamples we had with period length $14$ and $r(D)>0.2$ had the same pattern to the continued fraction expansion of $\xi$, namely,
\[
\left[ u_{0}, \overline{2, u_{2}, 1, u_{4}, 1, 1, 1, 1, 1, u_{4}, 1, u_{2}, 2, 2u_{0}} \right].
\]

%


So we searched over all $D$ such that the continued fraction of $\xi$ was
of this form with $0<u_{2}, u_{4}<100$. We noticed that the largest value of $r(D)$ occurred
when $u_{2}=2u_{4}+4$ and used such examples. Lastly, we noticed that $D$ was
smaller when $u_{4} \equiv 3 \bmod 4$, so we used only such $u_{4}$.

Using Maple, we found that if
\[
\xi = \left[ x, \overline{2, 2n+4, 1, n, 1, 1, 1, 1, 1, n, 1, 2n+4, 2, 2x} \right],
\]
then
\[
D=x^{2}+
4\frac{32n^{4}+272n^{3}+818n^{2}+1020n+449}{\left( 16n^{2}+72n+69
 \right)\left( 8n^{2}+34b+29 \right)} x
+\frac{\left( 8n^{2}+34n+31 \right)  \left( 4n^{2}+16n+13 \right)}{ \left( 16n^{2}+72n+69 \right)  \left( 8n^{2}+34n+29 \right)}.
\]

It can be shown that if
%
\[
x=\left( 16n^{2}+72n+69
 \right)\left( 8n^{2}+34b+29 \right)x'+\left( 28n^{3}+159n^{2}+541n/2+287/2 \right),
\]
then $D \in \Z$. Substituting $n=4n+3$ and $x'=2m$ (the use of $2m$ here ensures the two factors in the above expression for $D(m,n)$ are relatively prime) into the resulting
expression for $D$, we obtain the expression for $D(m,n)$ at the start
of this subsection.

\subsection{$D \equiv 2 \bmod 4$}

Let $m$ and $n$ be positive integers and set
\begin{eqnarray*}
D=D(m,n)
& = & \left( \left( 56n^{2}+184n+151 \right)^{2} m + 1568n^{4}-5376n^{3}-52960n^{2}-102576n-60832 \right) \\
&   & \times \left( \left( 112n^{2}+352n+276 \right)^{2} m + 6272n^{4}-23296n^{3}-202592n^{2}-365792n-203234 \right).
\end{eqnarray*}

Putting
\[
u_{0}=\left( 56n^{2}+184n+151 \right) \left( 112n^{2}+352n+276 \right) m+3136n^{4}-11200n^{3}-103640n^{2}-193808n-111190,
\]
note that the continued fraction expansion of $\sqrt{D(m,n)}$ is
\[
\left[ u_{0}, \overline{1, n, 1, 1, 1, 1, 1, 7n+10, 2, 7n+10, 1, 1, 1, 1, 1, n, 1, 2u_{0}} \right].
\]

We find that the minimum negative norm of the elements $\delta_{i+1}$ occurs for $i=3$ ($M_{3}$ comes from $\delta_{i,0}$), but for $j=7$, we have $u_{j+2}=2$, so we consider $\delta_{7,1}$ and find that
\[
M_{7,1}=\left( 56n^{2}+184n+151 \right)^{2}+3136n^{4}-10752n^{3}-105920n^{2}-205152n-121664.
\]

We have
\[
M_{7,1}-UB_{JK}
= \left( 784n^{4}+4592n^{3}+10027n^{2}+9672n+3477 \right)m
+784n^{4}-3248n^{3}-23545n^{2}-37678n-18553.
\]

A comparison with the above expression for $u_{0}$ shows that
$r(D) \rightarrow 1/4$, as $m, n \rightarrow \infty$. In fact, we have
\[
r(D) < \frac{1}{4}-\frac{1}{7n},
\]
so $1/4$ is approached from below.

\subsection{$D \equiv 1 \bmod 4$}

Here the limit of $r(D)$ within families is much smaller. This is in line
with the findings from our calculations for $D<10^{10}$ too. At the
moment, the best we have is the following single example.

For $D=172471024674149$, the continued fraction expansion of $\xi$ is
\[
\left[ 6566410, \overline{1, 2, 2, 3, 1, 1, 2, 20, 1, 20, 2, 1, 1, 3, 2, 2, 1, 13132821} \right],
\]
with a period length of $18$.

For $j=1$, $M_{j}=8569429$, $UB_{JK}=8279004$, so $r(D)=0.022114\ldots$.

Note that $N$ occurs for $i=5$.

%
%

\subsection{$D \equiv 1 \bmod 4$ (*)}

Let $m$ and $n$ be positive integers and set
\begin{eqnarray*}
D=D(m,n)
& = & \left( 2\left( 64n^{2}+144n+69 \right)^{2} m + 896n^{3}+2656n^{2}+2398n+685 \right) \\
&   & \times \left( 2\left( 32n^{2}+68n+29 \right)^{2} m + 224n^{3}+608n^{2}+486n+121 \right).
\end{eqnarray*}

Putting
\[
u_{0}=\left( 64n^{2}+144n+69 \right) \left( 32n^{2}+68n+29 \right)m+224n^{3}+636n^{2}+541n+143,
\]
note that the continued fraction expansion of $\sqrt{D(m,n)}$ is
\[
\left[ u_{0}, \overline{2, 4n+4, 1, 2n 1, 1, 1, 1, 1, 2n, 1, 4n+4, 2, 2u_{0}+1} \right].
\]

We find that the minimum negative norm of the elements $\delta_{i+1}$ occurs
for $i=5$ ($M_{5}$ comes from $\delta_{i,0}$), but for $j=11$, we have $u_{j+2}=2$,
so we consider $\delta_{11,1}$ and find that
\[
M_{11,1}=\left( 2048n^{4}+9216n^{3}+14624n^{2}+9576n+2206 \right) m +224n^{3}+664n^{2}+582n+159.
\]

We have
\[
M_{11,1}-UB_{JK}
= \left( 512n^{4}+2176n^{3}+3080n^{2}+1612n+246 \right) m
+56n^{3}+152n^{2}+104n+18.
\]

A comparison with the above expression for $u_{0}$ shows that
$r(D) \rightarrow 1/8$, as $m, n \rightarrow \infty$. In fact, we have
\[
r(D) < \frac{1}{8}-\frac{1}{64n},
\]
so $1/8$ is approached from below.

\section{Conjecture~\ref{conj:MP} and Theorem~\ref{thm:MP}}
\label{sec:conj}

\subsection{$D \equiv 1 \bmod 4$}

Our interest in the particular families of counterexamples in Section~\ref{sec:family} for $D \equiv 1 \bmod 4$ is that if we let $n \rightarrow \infty$, then
we find that $r(D) \rightarrow 1/8$ from below. We know of no counterexamples where $r(D)$ exceeds $1/8$.
None were found from our calculations described in Section~\ref{sec:calcs} (as noted above, the largest value of $r(D)$ that we found for $1<D<10^{10}$ was $r(D)=0.122981\ldots$ for $D=259,209,905$).
Furthermore, all the examples found with large $r(D)$ came from relatively short periods (at most of length 34), so we believe it is not likely that we missed larger values of $r(D)$ by the restriction of our calculation of those $D<10^{10}$.
Nor did any of our searches for infinite families of counterexamples lead to any such examples. Hence we make Conjecture~\ref{conj:MP} for $D \equiv 1 \bmod 4$.

\subsection{$D \equiv 2,3 \bmod 4$}

The justification for Conjecture~\ref{conj:MP} for $D \equiv 2,3 \bmod 4$ is of the same nature as that for $D \equiv 1 \bmod 4$. We chose the families of counterexamples in Section~\ref{sec:family} for $D \equiv 2,3 \bmod 4$ because $r(D) \rightarrow 1/4$ from below as $n \rightarrow \infty$. We know of no counterexamples where $r(D)$ exceeds $1/4$, neither from our calculations (as noted above, the largest value of $r(D)$ that we found for $1<D<10^{10}$ was $0.243264$ for $D= 3555318415$) or from our searches for infinite families of counterexamples.


We have not been able to prove this conjecture, but we have been able to prove Theorem~\ref{thm:MP}, which provides an upper bound that does grow like $\sqrt{D}$ and with a modest constant.

\subsection{Proof of Theorem~\ref{thm:MP}}

From the lower bound in \eqref{eq:LB3} and the upper bound in \eqref{eq:bnd2},
we see that $N_{i+1}<N_{j+1}$ implies that $c_{j+2}<c_{i+2}+1$.

Combining this with \eqref{eq:mi-conj1}, we obtain
\[
\frac{4M_{j}}{\sqrt{\Delta}}<c_{j+2}+1 \leq c_{i+2}+2.
\]

Also from \eqref{eq:mi-conj1}, we have
\[
c_{i+2}-2<\frac{4M_{i}}{\sqrt{\Delta}}.
\]

Hence
\[
\frac{4M_{j}-4M_{i}}{\sqrt{\Delta}}<4.
\]

If $D \equiv 2, 3 \bmod 4$, then we have $\sqrt{\Delta}=2\sqrt{D}$, so $M_{j}-M_{i}<2\sqrt{D}$.
Similarly, if $D \equiv 1 \bmod 4$, then $M_{j}-M_{i}<\sqrt{D}$. Combining the resulting upper bound for $M_{j}-M_{i}$ with Theorem~2(a) of \cite{Ka}, where $i$ is the index such that $N \left( \delta_{i+1} \right)$ is maximal among all  the negative norms, the theorem follows (note that Kala uses $\alpha_{i}$ and $\alpha_{i,r}$ where we, following Dress and Scharlau \cite{DS}, use $\delta_{i}$ and $\delta_{i,r}$, respectively).


\section*{Acknowledgements}

The authors gratefully acknowledge the many helpful suggestions of V\'it\v ezslav Kala during the preparation of this paper.

\end{document}